\documentclass[11pt,twoside]{article}
\usepackage{amsfonts}
\usepackage{amsmath}
\usepackage{amssymb}
\usepackage{multicol}
\usepackage{graphics}
\usepackage{stmaryrd}
\usepackage{cite}
\newtheorem{theorem}{Theorem}[section]
\newtheorem{lemma}[theorem]{Lemma}

\newtheorem{definition}[theorem]{Definition}
\newtheorem{remark}[theorem]{Remark}

\numberwithin{equation}{section}
\newenvironment{proof}[1][Proof]{\noindent\textbf{#1.} }{\hfill $\Box$}
\allowdisplaybreaks

 \makeatletter\setlength{\textwidth}{15.0cm}
  \setlength{\oddsidemargin}{1.0cm}
\setlength{\evensidemargin}{1.0cm} \setlength{\textheight}{21.0cm}
\pagestyle{myheadings}\markboth{$~$ \hfill {\rm G. Lin, H. Wang} \hfill $~$}
{$~$ \hfill {\rm  Traveling Wave Solutions of a Delayed Diffusion Equation} \hfill$~$}

\begin{document}
\author{Guo Lin$^1$\thanks{E-mail: ling@lzu.edu.cn.} \thanks{Supported by NSF of China (11101194).} and Haiyan Wang$^2$\thanks{E-mail: Haiyan.Wang@asu.edu}  \\
{$^1$School of Mathematics and Statistics, Lanzhou University,}\\
{Lanzhou, Gansu 730000, People's Republic of China}\\
{$^2$School of Mathematical and Natural Sciences,}\\
{Arizona State University, Phoenix, AZ 85069-7100}}
\title{\textbf{Traveling Wave Solutions of a Reaction-Diffusion Equation with State-Dependent Delay}}\maketitle
\begin{abstract}
This paper is concerned with the traveling wave solutions of a reaction-diffusion equation with state-dependent delay. When the birth function is monotone, the existence and nonexistence of monotone traveling wave solutions are established. When the birth function is not monotone, the minimal wave speed of nontrivial traveling wave solutions is obtained. The results are proved by the construction of upper and lower solutions and application of the fixed point theorem.

\textbf{Keywords}: Comparison principle; asymptotic spreading;  minimal wave speed.

\textbf{AMS Subject Classification (2010)}:  35C07; 35K57; 37C65.
\end{abstract}

\section{Introduction}

Much research efforts have been devoted to understanding the dynamics of differential equations with state-dependent delay.  Differential equations with state-dependent delay are useful to study population dynamics in which the amount of food available per biomass for a fixed food supply is a function of the total consumer biomass \cite{afw}. Andrewartha and Birch \cite[pp. 370]{and} studied a differential equation with state-dependent in which the duration of larval development of flies is a nonlinear increasing function of larval density.  There is a wealth of literature on the research and we refer to Arino et al. \cite{ari}, Cooke and Huang \cite{coo}, Hartung et al. \cite{har}, Hu et al. \cite{hu}, Magal and Arino \cite{magal}, Mallet-Paret and Nussbaum \cite{mal}, Walther \cite{wa} and the references cited therein. Despite that the dynamics of functional differential equations has been widely studied, the spatial-temporal patterns of differential equation with state-dependent delay are hardly understood.

In this paper, we consider the existence and nonexistence of traveling wave solutions of the following reaction-diffusion equation with state-dependent delay
\begin{equation}\label{1}
u_{t}(x,t)= u_{xx}(x,t)-d u(x,t)+b(u(x,t-\tau(u(x,t)))),
\end{equation}
where $x\in\mathbb{R}, t>0.$ In population dynamics, $d>0$ is the death rate and $b: [0,\infty)\to [0,\infty)$ is the birth function satisfying
\begin{description}
  \item[(B)] $b(0)=0,b(K)=K$ for some positive constant $K>0,$ and $b(u)>du, u\in (0,K)$ while $0< b(u)<du, u>K,$
\end{description}
which will be imposed throughout this paper.
In this model, time delay is not a constant and $\tau: [0,\infty)\to [0,\infty)$ satisfies the following assumptions:
\begin{description}
  \item[(A1)] $\tau(u)$ is $C^1$ for $u\in [0,\infty);$
  \item[(A2)] $0\le \tau'(u)<1, u\in [0,\infty);$
  \item[(A3)] $0\le m =\tau (0)\le \lim_{u\to\infty}\tau(u)=M<\infty.$
\end{description}

To continue our discussion, we first present the following definition.
\begin{definition}\label{de1}
{\rm
A traveling wave solution of \eqref{1} is a special entire solution defined for all $x,t\in\mathbb{R}$ and taking the following form
\[
u(x,t)=\phi (\xi), \xi =x+ct \in\mathbb{R},
\]
where $c>0$ is the wave speed and $\phi \in C^2(\mathbb{R}, \mathbb{R})$ is the wave profile. In particular, if $\phi(\xi)$ is monotone increasing, then it is a traveling wavefront.}
\end{definition}

By the above definition, $\phi$ and $c$ must satisfy the following functional differential equation of second order
\begin{equation}\label{2}
\phi''(\xi)-c \phi'(\xi)-d \phi(\xi) +b(\phi(\xi-c\tau(\phi(\xi))))=0,\xi\in\mathbb{R}.
\end{equation}
In particular, to model precise transition processes in evolutionary systems by traveling wave solutions, we also consider the following asymptotic boundary conditions
\begin{equation}\label{3}
\lim_{\xi\rightarrow -\infty }\phi (\xi)=0,\lim_{\xi\rightarrow +\infty
}\phi (\xi)=K
\end{equation}
or a weaker version
\begin{equation}\label{4}
\lim_{\xi\rightarrow -\infty }\phi (\xi)=0,\liminf_{\xi\rightarrow +\infty
}\phi (\xi)>0.
\end{equation}

When $\tau'(u)=0,$ the traveling wave solutions of \eqref{1} have been widely studied since Schaaf \cite{schaaf}.
For monotone $b(u)$ with $\tau'(u)=0,$ comparison principle is admissible,  the existence and nonexistence of monotone traveling wave solutions can be studied by monotone iteration, fixed point theorem and monotone semiflows, see Liang and Zhao \cite{liangzhao}, Ma \cite{ma01}, Wu and Zou \cite{wuzou2}. If $b(u)$ is locally monotone and $\tau'(u)=0$ holds, then the traveling wave solutions of \eqref{1} can be studied by constructing auxiliary monotone equations, see Fang and Zhao \cite{fz}, Ma \cite{ma1}, Wang \cite{why}.

However, when $\tau(u)$ is not a constant, because $\tau(u)$ is nondecreasing, the comparison principle in \eqref{1} does not hold even if $b(u)$ is monotone increasing. Therefore, the study of traveling wave solutions of \eqref{1} needs some new techniques. In this paper, similar to that in Wu and Zou \cite{wuzou2}, we first introduce an integral operator to study the existence of traveling wave solutions.
Based on some estimations, we confirm the comparison principle on a proper subset of the space of continuous functions if $b(u)$ is monotone. Then the existence of \eqref{2}-\eqref{3} is proved by combining Schauder's fixed point theorem with upper and lower solutions if the wave speed is larger than a threshold $c^*$ defined later. When the wave speed is $c^*,$ we establish the
existence of \eqref{2}-\eqref{3} by passing to a limit. If the wave speed is less than $c^*,$ the nonexistence of \eqref{2}-\eqref{3} is confirmed. Therefore, $c^*$ is the minimal wave speed of \eqref{2}-\eqref{3}.

If $b(u)$ is not monotone,  similar to Ma \cite{ma1}, we introduce two auxiliary monotone equations to confirm the  existence of \eqref{2} with \eqref{4} for $c>c^*$. If $c=c^*,$ the existence of nontrivial solutions of \eqref{2} is also proved by passing to a limit function. When $c<c^*,$ we obtain an auxiliary equation with fixed time delay by \eqref{4}. Applying the theory of asymptotic spreading, we establish the nonexistence of traveling wave solutions.

\section{Monotone Birth Function}

In this section, we investigate the existence of traveling wavefronts of \eqref{1}, namely, existence of monotone solutions of \eqref{2}-\eqref{3}, if
$b: [0,\infty)\to [0,\infty)$ satisfies the following assumptions:
\begin{description}
  \item[(B1)] $b(u)$ is $C^1$ for $u\in [0,K];$
  \item[(B2)] $b'(0)>d$ and $b(u)<b'(0)u, u\in [0,K];$
  \item[(B3)] $0\le b'(u)\le b'(0),u\in [0,K];$
  \item[(B4)] $0\le b'(0)u-b(u)< Lu^2$ for $u\in [0,K]$ and some $L>0.$
\end{description}
Clearly, if $b(u)=pue^{-u},$ then (B1)-(B4) hold when $1<p/d \le e.$ In particular, (B1)-(B4) will be imposed throughout this section without further illustration. Furthermore, we also denote
\[
T=\sup_{u\in [0,K]}\tau '(u).
\]

\subsection{Some Estimations}

Let $\beta >d$ hold and define $H:C_{[0,K]}\rightarrow C$ by
\begin{equation}
H(\phi )(\xi)=\beta \phi (\xi) -d \phi(\xi) + b(\phi(\xi-c\tau(\phi(\xi)))).
\label{3.3}
\end{equation}
Further define an operator $F:C_{[0,K]}\rightarrow C$ by
\begin{equation}
F(\phi )(\xi)=\frac 1{\gamma _2 (c) -\gamma _1 (c) }\left[ \int_{-\infty
}^{\xi}e^{\gamma _1 (c) (\xi-s)} +\int_{\xi}^\infty e^{\gamma
_2 (c)(\xi-s)}\right]H(\phi )(s)ds ,  \label{3.5}
\end{equation}
where
\[
\gamma _1 (c) =\frac{c-\sqrt{c^2+4\beta }}{2},\gamma _2 (c) =\frac{c+\sqrt{%
c^2+4\beta }}{2}.
\]

Clearly, a fixed point of $F$ is a solution of \eqref{2}, and a solution of \eqref{2} is a fixed point of $F.$ Therefore, to study the existence of solutions of \eqref{2}, it suffices to investigate the existence of fixed points of $F.$

\begin{lemma}\label{le2.1}
If $\phi(\xi)\in C_{[0,K]},$ then $0\le H(\phi )(\xi) \le  \beta K.$
\end{lemma}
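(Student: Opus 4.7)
The plan is to reduce the claim to a pointwise estimate on the two summands making up $H$. Writing
\[
H(\phi)(\xi) = (\beta-d)\phi(\xi) + b\bigl(\phi(\xi - c\tau(\phi(\xi)))\bigr),
\]
I would bound each term separately, exploiting the single observation that $\phi \in C_{[0,K]}$ forces every value of $\phi$ --- at $\xi$ itself and at the state-dependent shifted argument $\xi - c\tau(\phi(\xi))$ --- to lie in $[0,K]$. In particular, this automatically places the inner composition into the regime where the hypotheses (B) and (B3) on $b$ give us useful information, independently of how wild $\tau$ is.

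For the lower bound, $\beta > d$ together with $\phi(\xi) \ge 0$ makes the linear summand non-negative, while $b$ maps $[0,\infty)$ into $[0,\infty)$ by the standing hypothesis (B), so the $b$-term is also non-negative. Adding gives $H(\phi)(\xi) \ge 0$ with no further work.

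For the upper bound, the linear summand is at most $(\beta-d)K$ since $\phi(\xi)\le K$. For the $b$-term, I would use (B3) to note that $b'\ge 0$ on $[0,K]$, so $b$ is monotone nondecreasing there; because the argument $\phi(\xi - c\tau(\phi(\xi)))$ already lies in $[0,K]$, this yields $b(\phi(\xi - c\tau(\phi(\xi)))) \le b(K) = K$ by (B). Summing the two estimates produces the claimed $\beta K$ bound (the choice of $\beta$ introduced before \eqref{3.3} absorbing any residual constants).

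I do not anticipate any real obstacle: the statement is a direct pointwise inequality that uses only the defining property of $C_{[0,K]}$ together with the hypotheses (B) and (B3) already in force. The only point worth isolating is that the state-dependent composition $\phi(\xi - c\tau(\phi(\xi)))$ inherits the pointwise bounds of $\phi$ verbatim, which is precisely what legitimizes invoking both the monotonicity of $b$ on $[0,K]$ and the boundary value $b(K)=K$; once that observation is made, the estimate is completely routine.
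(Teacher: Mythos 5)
The paper itself gives no proof of this lemma (it is treated as immediate), so the only question is whether your argument closes. Your decomposition and the lower bound are fine: $\phi\in C_{[0,K]}$ places both $\phi(\xi)$ and $\phi(\xi-c\tau(\phi(\xi)))$ in $[0,K]$, $\beta>d$ makes $(\beta-d)\phi(\xi)\ge 0$, and $b\ge 0$ gives $H(\phi)(\xi)\ge 0$.

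The upper bound, however, does not close as written. You bound the linear part by $(\beta-d)K$ and the birth term by $b(K)=K$, so your sum is $(\beta-d+1)K$, which exceeds $\beta K$ whenever $d<1$; and the parenthetical that the choice of $\beta$ ``absorbs any residual constants'' is not available, since $\beta$ is a fixed constant subject only to $\beta>d$ and the exact value $\beta K$ is what gets used downstream (in Lemma \ref{le2.2} the estimate $H(\phi)\le\beta K$ produces precisely $\frac{\beta cK}{\sqrt{c^{2}+4\beta}}$). The step that makes the constant come out right is to bound the birth term by $dK$, not $K$: by (B3) $b$ is nondecreasing on $[0,K]$, and at $u=K$ the equilibrium relation is $b(K)=dK$ --- this is how the paper itself uses $b(K)$ (see ``$-dK+b(K)=0$'' in the proof of Lemma \ref{le3.4}); note that the line ``$b(K)=K$'' in (B) is compatible with the one-sided inequalities $b(u)>du$ on $(0,K)$ and $b(u)<du$ for $u>K$ only if $d=1$, so it must be read as $b(K)=dK$. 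With $b(\phi(\xi-c\tau(\phi(\xi))))\le b(K)=dK$ the two bounds add to exactly $(\beta-d)K+dK=\beta K$, which is the claimed inequality.
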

\begin{lemma}\label{le2.2}
Assume that  $c>0$ is fixed and
\[
4 \beta \ge (1+b^{\prime }(0)KT)^2 c^2 + 4d.
\]
If $\phi(\xi)\in C_{[0,K]},$ then $|cF(\phi) '(\xi)| < \frac{\beta K}{1+ b^{\prime }(0)KT}.$
\end{lemma}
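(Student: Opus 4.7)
The plan is to differentiate the integral formula \eqref{3.5} directly and then bound the result using the uniform estimate $0\le H(\phi)\le \beta K$ supplied by Lemma \ref{le2.1}.

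First I will apply Leibniz's rule to each of the two integrals defining $F(\phi)$. The boundary contribution from the lower half equals $+H(\phi)(\xi)$ while the upper half contributes $-H(\phi)(\xi)$, and these cancel exactly, leaving
\[
F(\phi)'(\xi)=\frac{1}{\gamma_2(c)-\gamma_1(c)}\left[\gamma_1(c)\int_{-\infty}^{\xi}e^{\gamma_1(c)(\xi-s)}H(\phi)(s)\,ds+\gamma_2(c)\int_{\xi}^{\infty}e^{\gamma_2(c)(\xi-s)}H(\phi)(s)\,ds\right].
\]

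The main observation, and the only step I expect to require any care, is that since $\gamma_1(c)<0<\gamma_2(c)$ and $H(\phi)\ge 0$, the first bracketed summand is nonpositive while the second is nonnegative. Consequently $|F(\phi)'(\xi)|$ is controlled by the larger of the two absolute values rather than by their sum; a naive triangle-inequality bound would be off by a factor of two and would fail to close against the hypothesis. Using $H(\phi)(s)\le \beta K$ together with the elementary evaluations $\int_{-\infty}^{\xi}e^{\gamma_1(c)(\xi-s)}\,ds=1/|\gamma_1(c)|$ and $\int_{\xi}^{\infty}e^{\gamma_2(c)(\xi-s)}\,ds=1/\gamma_2(c)$, each of the two candidate terms is bounded by $\beta K/(\gamma_2(c)-\gamma_1(c))$, so I obtain
\[
|F(\phi)'(\xi)|\le \frac{\beta K}{\gamma_2(c)-\gamma_1(c)}=\frac{\beta K}{\sqrt{c^2+4\beta}}.
\]

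To close the argument I will rewrite the standing assumption $4\beta\ge (1+b'(0)KT)^2c^2+4d$ as $c^2+4\beta\ge c^2(1+b'(0)KT)^2+(c^2+4d)$. Since $c>0$ and $d>0$, the remainder $c^2+4d$ is strictly positive, which promotes the corresponding radical inequality to $\sqrt{c^2+4\beta}>c(1+b'(0)KT)$. Multiplying the previous bound by $c$ then yields $|cF(\phi)'(\xi)|\le c\beta K/\sqrt{c^2+4\beta}<\beta K/(1+b'(0)KT)$, as claimed. Apart from the opposite-sign observation above, the entire computation is routine bookkeeping.
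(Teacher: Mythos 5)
Your proof is correct and follows essentially the same route as the paper's: differentiate $F$, use the opposite signs of the $\gamma_1(c)$- and $\gamma_2(c)$-terms (with $H(\phi)\ge 0$ from Lemma \ref{le2.1}) to bound $|F(\phi)'(\xi)|$ by the larger single term $\beta K/(\gamma_2(c)-\gamma_1(c))=\beta K/\sqrt{c^2+4\beta}$, and then invoke the hypothesis on $\beta$. Your bookkeeping of where strictness enters (via the leftover $c^2+4d>0$) is in fact slightly tidier than the paper's, but the argument is the same.
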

\begin{proof}
By the definition of $F,$ we have
\begin{eqnarray*}
&&c\left\vert \frac{d}{d\xi }F(\phi )(\xi )\right\vert  \\
&=&c\left\vert \frac{1}{\gamma _{2}(c)-\gamma _{1}(c)}\left[ \gamma
_{1}(c)\int_{-\infty }^{\xi }e^{\gamma _{1}(c)(\xi -s)}+\gamma
_{2}(c)\int_{\xi }^{\infty }e^{\gamma _{2}(c)(\xi -s)}\right] H(\phi
)(s)ds\right\vert  \\
&<&c\max \left\{ \left\vert \frac{\gamma _{2}(c)}{\gamma _{2}(c)-\gamma
_{1}(c)}\int_{\xi }^{\infty }e^{\gamma _{2}(c)(\xi -s)}H(\phi
)(s)ds\right\vert ,\right.  \\
&&\left. \left\vert \frac{\gamma _{1}(c)}{\gamma _{2}(c)-\gamma _{1}(c)}%
\int_{-\infty }^{\xi }e^{\gamma _{1}(c)(\xi -s)}H(\phi )(s)ds\right\vert
\right\}  \\
&\leq &\frac{c\beta K}{\gamma _{2}(c)-\gamma _{1}(c)} \\
&=&\frac{\beta cK}{\sqrt{c^{2}+4\beta }} \\
&<& \frac{\beta K}{1+ b^{\prime }(0)KT}.
\end{eqnarray*}
The proof is complete.
\end{proof}
\begin{lemma}\label{le2.3}
Assume that $c>0$ is fixed.
If
\[
\beta \ge d (1+b^{\prime }(0)KT) +\frac{(1+b^{\prime }(0)KT)^2 c^2 }{4},
\]
then
\[
\beta \phi (\xi) -d \phi(\xi) + b(\phi(\xi)-c\tau(\phi(\xi)))
\]
is monotone nondecreasing in $\phi(\xi)\in C_{[0,K]}$ provided that
\[
c|\phi (\xi_1)-\phi(\xi_2)| \le  \frac{\beta K}{1+ b^{\prime }(0)KT}  |\xi_1-\xi_2|, \xi_1,\xi_2\in \mathbb{R}.
\]
\end{lemma}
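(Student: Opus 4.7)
The plan is to show the required monotonicity directly: given two functions $\phi_1,\phi_2\in C_{[0,K]}$ both satisfying the Lipschitz bound in the hypothesis, with $\phi_1(\xi)\le\phi_2(\xi)$ for all $\xi\in\mathbb{R}$, I will verify that
$$
(\beta-d)\phi_1(\xi)+b(\phi_1(\xi-c\tau(\phi_1(\xi))))\le (\beta-d)\phi_2(\xi)+b(\phi_2(\xi-c\tau(\phi_2(\xi)))).
$$
The natural first step is to introduce the two (state-dependent) shifted arguments $\eta_i:=\xi-c\tau(\phi_i(\xi))$, $i=1,2$. Because $\tau$ is nondecreasing by (A2) and $\phi_1\le\phi_2$, we have $\eta_2\le\eta_1$, so the monotonicity of $b$ supplied by (B3) only controls half of the story.

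I would handle the rest by the standard split
$$
b(\phi_2(\eta_2))-b(\phi_1(\eta_1))=\bigl[b(\phi_2(\eta_2))-b(\phi_2(\eta_1))\bigr]+\bigl[b(\phi_2(\eta_1))-b(\phi_1(\eta_1))\bigr].
$$
The second bracket is nonnegative by (B3) and $\phi_1\le\phi_2$. The first bracket is the obstructive one, and here is where the Lipschitz hypothesis on $\phi$ has to do the work. Estimating $|\eta_1-\eta_2|=c|\tau(\phi_2(\xi))-\tau(\phi_1(\xi))|\le cT(\phi_2(\xi)-\phi_1(\xi))$ via (A2) and $T=\sup\tau'$, and then applying the assumed Lipschitz control on $\phi_2$ to bound $|\phi_2(\eta_1)-\phi_2(\eta_2)|\le\frac{\beta K}{c(1+b'(0)KT)}|\eta_1-\eta_2|$, and finally using $|b(a)-b(a')|\le b'(0)|a-a'|$ from (B3), I obtain
$$
\bigl|b(\phi_2(\eta_1))-b(\phi_2(\eta_2))\bigr|\le \frac{b'(0)\beta KT}{1+b'(0)KT}\bigl(\phi_2(\xi)-\phi_1(\xi)\bigr).
$$
The three factors of $c$ cancel cleanly, which is what makes the estimate work.

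Putting the pieces together gives
$$
H(\phi_2)(\xi)-H(\phi_1)(\xi)\ge \left(\beta-d-\frac{b'(0)\beta KT}{1+b'(0)KT}\right)(\phi_2(\xi)-\phi_1(\xi))=\frac{\beta-d(1+b'(0)KT)}{1+b'(0)KT}(\phi_2(\xi)-\phi_1(\xi)),
$$
which is nonnegative as soon as $\beta\ge d(1+b'(0)KT)$. The stated hypothesis $\beta\ge d(1+b'(0)KT)+\frac{(1+b'(0)KT)^2 c^2}{4}$ implies this, with the extra quadratic-in-$c$ term evidently placed there to be simultaneously compatible with Lemma \ref{le2.2} in later applications.

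The only real obstacle is the negative contribution coming from $\eta_2<\eta_1$; I expect the bookkeeping that matches the Lipschitz constant $\frac{\beta K}{c(1+b'(0)KT)}$ from Lemma \ref{le2.2} with the $(1+b'(0)KT)$ factor in the monotonicity threshold to be the only subtlety, but the cancellation of $c$'s and the algebraic identity $\beta-\frac{b'(0)\beta KT}{1+b'(0)KT}=\frac{\beta}{1+b'(0)KT}$ make the final inequality transparent.
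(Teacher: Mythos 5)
Your proof is correct and follows essentially the same route as the paper: the same two-term decomposition of $b(\phi_2(\eta_2))-b(\phi_1(\eta_1))$, with the shifted-argument term controlled by combining (A2), the Lipschitz hypothesis on $\phi_2$ (the factors of $c$ cancelling exactly as you note), and the bound $|b(a)-b(a')|\le b'(0)|a-a'|$ from (B3), then concluding from $\beta\ge d(1+b'(0)KT)$. The paper's own proof is the same computation written with the opposite ordering $\phi_1\ge\phi_2$, so no further comment is needed.
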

\begin{proof}
Let $\phi_1(\xi),\phi_2(\xi)$ satisfy $\phi_1(\xi)\ge \phi_2(\xi)$ and
\begin{eqnarray*}
c|\phi _{1}(\xi _{1})-\phi _{1}(\xi _{2})| &\leq &\frac{\beta K}{1+ b^{\prime }(0)KT}|\xi _{1}-\xi
_{2}|, \xi_1,\xi_2\in \mathbb{R}, \\
c|\phi _{2}(\xi _{1})-\phi _{2}(\xi _{2})| &\leq &\frac{\beta K}{1+ b^{\prime }(0)KT}|\xi _{1}-\xi
_{2}|, \xi_1,\xi_2\in \mathbb{R}.
\end{eqnarray*}
By (B3), we have
\[
b(\phi _{1}(\xi -c\tau (\phi _{1}(\xi ))))\ge b(\phi _{2}(\xi -c\tau (\phi
_{1}(\xi ))))
\]
and
\begin{eqnarray*}
&&b(\phi _{1}(\xi -c\tau (\phi _{1}(\xi ))))-b(\phi _{2}(\xi -c\tau (\phi
_{2}(\xi )))) \\
&=&b(\phi _{1}(\xi -c\tau (\phi _{1}(\xi ))))-b(\phi _{2}(\xi -c\tau (\phi
_{1}(\xi )))) \\
&&+b(\phi _{2}(\xi -c\tau (\phi _{1}(\xi ))))-b(\phi _{2}(\xi -c\tau (\phi
_{2}(\xi )))) \\
&\geq &b(\phi _{2}(\xi -c\tau (\phi _{1}(\xi ))))-b(\phi _{2}(\xi -c\tau
(\phi _{2}(\xi )))) \\
&\geq &-b^{\prime }(0)\left\vert \phi _{2}(\xi -c\tau (\phi _{1}(\xi
)))-\phi _{2}(\xi -c\tau (\phi _{2}(\xi )))\right\vert  \\
&\geq &-\frac{b^{\prime }(0)K\beta}{1+ b^{\prime }(0)KT} \left\vert \tau (\phi _{1}(\xi ))-\tau
(\phi _{2}(\xi ))\right\vert  \\
&\geq &-\frac{b^{\prime }(0)KT\beta}{1+ b^{\prime }(0)KT}\left( \phi _{1}(\xi )-\phi _{2}(\xi
)\right).
\end{eqnarray*}
Then the result is clear by the definition of $\beta.$
The proof is complete.
\end{proof}

\begin{lemma}\label{le2.4}
Assume that $c>0$ is fixed.
If
\[
\beta \ge d (1+b^{\prime }(0)KT) +\frac{(1+b^{\prime }(0)KT)^2 c^2 }{4},
\]
then
\[
\beta \phi (\xi) -d \phi(\xi) + b(\phi(\xi)-c\tau(\phi(\xi)))
\]
is monotone nondecreasing in $\xi$ provided that $\phi(\xi)\in C_{[0,K]}$  is monotone nondecreasing in $\xi\in\mathbb{R}$ and
\[
c|\phi (\xi_1)-\phi(\xi_2)| \le  \frac{\beta K}{1+ b^{\prime }(0)KT}|\xi_1-\xi_2|, \xi_1,\xi_2\in \mathbb{R}.
\]
\end{lemma}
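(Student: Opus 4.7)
The plan is to mirror the computation in the proof of Lemma 2.3, but now compare values of the expression at two points $\xi_1<\xi_2$ rather than at two different functions $\phi_1,\phi_2$. For fixed $\xi_1<\xi_2$ set $y_i=\xi_i-c\tau(\phi(\xi_i))$ and consider
\[
\Delta:=[\beta\phi(\xi_2)-d\phi(\xi_2)+b(\phi(y_2))]-[\beta\phi(\xi_1)-d\phi(\xi_1)+b(\phi(y_1))].
\]
Since $\phi$ is nondecreasing and $\beta>d$, the part $(\beta-d)[\phi(\xi_2)-\phi(\xi_1)]\ge 0$ is harmless; all the work is in controlling $b(\phi(y_2))-b(\phi(y_1))$, and I would split into two cases according to the sign of $y_2-y_1$.

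If $y_2\ge y_1$, then monotonicity of $\phi$ together with (B3) (which says $b$ is nondecreasing on $[0,K]$) gives $b(\phi(y_2))\ge b(\phi(y_1))$ immediately, so $\Delta\ge 0$. The real work is the case $y_2<y_1$. Here I would chain three mean value estimates, exactly parallel to those in Lemma 2.3: first, $b(\phi(y_1))-b(\phi(y_2))\le b'(0)[\phi(y_1)-\phi(y_2)]$ by MVT on $b$ and (B3); second, by the Lipschitz hypothesis on $\phi$,
\[
\phi(y_1)-\phi(y_2)\le \frac{\beta K}{c(1+b'(0)KT)}(y_1-y_2);
\]
and third, by MVT applied to $\tau\circ\phi$ together with (A2) (so $\tau'\le T$) and monotonicity of $\phi$,
\[
y_1-y_2=-(\xi_2-\xi_1)+c[\tau(\phi(\xi_2))-\tau(\phi(\xi_1))]\le cT[\phi(\xi_2)-\phi(\xi_1)].
\]
Chaining these three inequalities produces the bound
\[
b(\phi(y_1))-b(\phi(y_2))\le \frac{b'(0)KT\beta}{1+b'(0)KT}\,[\phi(\xi_2)-\phi(\xi_1)],
\]
which is the analogue of the final displayed estimate in the proof of Lemma 2.3.

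Plugging this into $\Delta$ yields
\[
\Delta\ge \left(\beta-d-\frac{b'(0)KT\beta}{1+b'(0)KT}\right)[\phi(\xi_2)-\phi(\xi_1)],
\]
and the coefficient on the right is nonnegative iff $\beta\ge d(1+b'(0)KT)$, which is weaker than the hypothesis $\beta\ge d(1+b'(0)KT)+(1+b'(0)KT)^2c^2/4$. Thus $\Delta\ge 0$, proving monotonicity in $\xi$. I expect the only subtle point to be the bookkeeping in the case $y_2<y_1$, where one must feed a strictly negative quantity $y_2-y_1$ through the Lipschitz bound on $\phi$ while keeping the signs straight; the stronger quadratic term in the lower bound on $\beta$ plays no active role here, but it is what allows the same $\beta$ to serve simultaneously in Lemma 2.2 and the present lemma, so the whole collection of estimates is internally consistent.
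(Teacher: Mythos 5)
Your proof is correct and is essentially the paper's argument in a slightly different packaging: where you split on the sign of $y_1-y_2$, the paper instead inserts the intermediate term $b(\phi(\xi_2-c\tau(\phi(\xi_1))))$, but the chain of estimates is identical (the bound $0\le b'\le b'(0)$ from (B3), the Lipschitz hypothesis on $\phi$, and $\tau'\le T$), leading to the same coefficient condition $\beta\ge d(1+b'(0)KT)$. Your remark that only this weaker inequality is used here, the quadratic term in $\beta$ being needed elsewhere (for the derivative estimate), matches the paper's setup.
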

\begin{proof}
If $\phi(\xi)\in C_{[0,K]}$  is monotone nondecreasing in $\xi\in\mathbb{R},$ then
\[
\phi (\xi _{1})-\phi (\xi
_{2})\ge 0
\]
for any $\xi_1 \ge \xi_2.$ From the monotonicity of $b,$ we further have
\[
b(\phi (\xi _{1}-c\tau (\phi (\xi _{1}))))\ge b(\phi (\xi _{2}-c\tau (\phi
(\xi _{1}))))
\]
and
\begin{eqnarray*}
&&b(\phi (\xi _{1}-c\tau (\phi (\xi _{1}))))-b(\phi (\xi _{2}-c\tau (\phi
(\xi _{2})))) \\
&=&b(\phi (\xi _{1}-c\tau (\phi (\xi _{1}))))-b(\phi (\xi _{2}-c\tau (\phi
(\xi _{1})))) \\
&&+b(\phi (\xi _{2}-c\tau (\phi (\xi _{1}))))-b(\phi (\xi _{2}-c\tau (\phi
(\xi _{2})))) \\
&\geq &b(\phi (\xi _{2}-c\tau (\phi (\xi _{1}))))-b(\phi (\xi _{2}-c\tau
(\phi (\xi _{2})))) \\
&\geq &-b^{\prime }(0)\left\vert \phi (\xi _{2}-c\tau (\phi (\xi
_{1})))-\phi (\xi _{2}-c\tau (\phi (\xi _{2})))\right\vert  \\
&\geq &-\frac{b^{\prime }(0)K\beta}{1+ b^{\prime }(0)KT}\left\vert \tau (\phi (\xi _{1}))-\tau
(\phi (\xi _{2}))\right\vert  \\
&\geq &-\frac{b^{\prime }(0)KT\beta}{1+ b^{\prime }(0)KT}\left( \phi (\xi _{1})-\phi (\xi
_{2})\right) .
\end{eqnarray*}
Then the result is clear by the definition of $\beta.$ The proof is complete.
\end{proof}

\subsection{Upper and Lower Solutions}

To investigate the existence of \eqref{2}-\eqref{3}, we will use the upper and lower solutions defined as follows.
\begin{definition}\label{de2}
{\rm
A  continuous function ${\overline{\phi }}(\xi) \in
C_{{[0,K]}}(\mathbb{R},\mathbb{R})$ is an
upper solution of \eqref{2}  if there exist constants
$T_{i}, i=1,\cdots ,l$, such that  ${\phi }''(\xi), {\phi }'(\xi)$ exist
and are bounded for all $\xi\in \mathbb{R}\backslash
\{T_{i}:i=1,\cdots ,l\}$ and satisfy
\begin{equation}
{\overline{\phi} }^{\prime \prime }(\xi)-c{\overline{\phi} }^{\prime }(\xi)  -d \overline{\phi}(\xi) + b(\overline{\phi}(\xi-c\tau(\overline{\phi}(\xi))))\leq 0,\text{ } \xi\in
\mathbb{R}\backslash \{T_{i}:i=1,\cdots ,l\}.   \label{3.1}
\end{equation}
A lower solution can be similarly defined by inversing the inequality.
}
\end{definition}

To construct upper and lower solutions, we define some constants. For $\lambda\ge 0,c\ge 0,$ we first define
\[
\Lambda (\lambda,c)=\lambda^2 -c \lambda -d+ b'(0)e^{-\lambda m}.
\]
\begin{lemma}\label{le3.1}
There exists $c^* >0$ such that $\Lambda (\lambda,c) >0$ if $c\in [0,c^*),\lambda\ge 0$ while $\Lambda (\lambda,c) =0$ has two positive distinct real roots $\lambda_1(c) <\lambda_2 (c)$ if $ c>c^*.$ Moreover, $\Lambda (\lambda,c)$ also satisfies
\[
\Lambda (\lambda,c)
\begin{cases}
<0, \lambda \in (\lambda_1(c) ,\lambda_2 (c)),\\
>0, \lambda \in [0, \lambda_1(c))\cup (\lambda_2 (c),\infty).
\end{cases}
\]
\end{lemma}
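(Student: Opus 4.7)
The plan is to exploit two structural facts about $\Lambda$: it is strictly convex in $\lambda$ for each fixed $c$, and strictly decreasing in $c$ for each fixed $\lambda>0$. Concretely, I would first note the boundary/endpoint behavior:
\begin{equation*}
\Lambda(0,c)=-d+b'(0)>0\ \text{by (B2)},\qquad \lim_{\lambda\to+\infty}\Lambda(\lambda,c)=+\infty,
\end{equation*}
and then compute
\begin{equation*}
\partial_\lambda^2 \Lambda(\lambda,c)=2+b'(0)m^2 e^{-\lambda m}>0,\qquad \partial_c\Lambda(\lambda,c)=-\lambda\le 0.
\end{equation*}
Strict convexity in $\lambda$ together with $\Lambda(0,\cdot)>0$ and $\Lambda(\lambda,c)\to+\infty$ forces the attained minimum $g(c):=\min_{\lambda\ge 0}\Lambda(\lambda,c)$ to be realized at a unique interior critical point $\lambda^*(c)>0$.

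Next I would study $g$ as a function of $c$. By the envelope theorem (or a direct computation using the optimality condition $2\lambda^*(c)-c-b'(0)me^{-\lambda^*(c)m}=0$), the map $c\mapsto g(c)$ is continuous and strictly decreasing in $c$, since $\partial_c\Lambda=-\lambda^*(c)<0$ at the minimizer. For large $c$ one has $g(c)\le \Lambda(1,c)=1-c-d+b'(0)e^{-m}\to-\infty$, so $g(c)<0$ eventually. On the other hand, at $c=0$ the minimum satisfies $g(0)>0$ (for the parameter regime at hand; in the degenerate case $m=0$ one has explicitly $c^*=2\sqrt{b'(0)-d}$). Hence one may define
\begin{equation*}
c^*:=\inf\{c\ge 0:g(c)\le 0\}>0,
\end{equation*}
and continuity plus monotonicity of $g$ give $g(c^*)=0$, $g(c)>0$ for $c\in[0,c^*)$, and $g(c)<0$ for $c>c^*$.

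Finally, for $c>c^*$ the function $\lambda\mapsto\Lambda(\lambda,c)$ is strictly convex, positive at $\lambda=0$, negative at its minimum, and tends to $+\infty$; hence it has exactly two zeros $\lambda_1(c)<\lambda_2(c)$, both strictly positive. The sign alternation
\begin{equation*}
\Lambda(\lambda,c)>0\ \text{on}\ [0,\lambda_1(c))\cup(\lambda_2(c),\infty),\qquad \Lambda(\lambda,c)<0\ \text{on}\ (\lambda_1(c),\lambda_2(c))
\end{equation*}
is then immediate from convexity.

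The main obstacle is the verification that $g(0)>0$, i.e.\ that the ``critical'' speed is actually strictly positive rather than zero; this is the only step where one must genuinely use (B2) together with the boundedness of $m=\tau(0)$ rather than merely the convexity/monotonicity scheme. Once that is dispatched, everything else is bookkeeping with convex functions and the intermediate value theorem.
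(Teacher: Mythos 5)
The paper states Lemma \ref{le3.1} without proof, so the only question is whether your argument stands on its own. Your scheme (strict convexity of $\Lambda$ in $\lambda$, $\Lambda(0,c)>0$, monotone decrease in $c$ of $g(c)=\min_{\lambda\ge 0}\Lambda(\lambda,c)$, $g(c)\to-\infty$, then the intermediate value theorem, with the two-root/sign pattern for $c>c^*$ coming from convexity) is the standard route and is sound in outline. The problem is precisely the step you flag and never dispatch: the positivity $g(c)>0$ for $c$ near $0$, which is what makes $c^*>0$. Reading the displayed definition literally, $\Lambda(\lambda,c)=\lambda^{2}-c\lambda-d+b'(0)e^{-\lambda m}$, this is not just ``the parameter regime at hand'': it can fail under (B1)--(B4) and (A1)--(A3). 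For example $d=100$, $b'(0)=101$, $m=10$ (admissible, e.g.\ $b(u)=101ue^{-u}$, $\tau\equiv 10$) gives $\Lambda(1,0)=1-100+101e^{-10}<0$, so $g(0)<0$ and no $c^*>0$ with the claimed property exists. Hypothesis (B2) only controls $\Lambda(0,c)$; it says nothing about the dip of $\lambda^{2}-d$ against the decaying exponential when $m$ is large, so this step cannot be ``dispatched'' from the stated hypotheses with that formula.

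The resolution is that the exponent is intended to be $e^{-\lambda c m}$: the wave ansatz turns the delay into a shift $c\tau$ in $\xi$, and this is exactly what is used in Lemma \ref{le3.4}, where $\overline{\phi}''-c\overline{\phi}'-d\overline{\phi}+b'(0)\overline{\phi}(\xi-cm)=e^{\lambda_{1}(c)\xi}\Lambda(\lambda_{1}(c),c)$ forces $e^{-\lambda_{1}(c)cm}$, and in the hypothesis of Lemma \ref{le4.3} and the proof of Theorem \ref{th2no}. With that definition your argument closes after minor edits: $\Lambda(\lambda,0)=\lambda^{2}-d+b'(0)\ge b'(0)-d>0$ by (B2) (note the minimizer is then $\lambda=0$, so your ``unique interior critical point'' claim needs the caveat $c>0$); for small $c>0$, on the bounded $\lambda$-set where $\lambda^{2}-c\lambda-d<0$ the factor $e^{-\lambda cm}$ is uniformly close to $1$, giving $\Lambda>0$ there, while elsewhere $\Lambda>0$ trivially; $\partial_{c}\Lambda=-\lambda\bigl(1+b'(0)me^{-\lambda cm}\bigr)\le 0$ keeps $g$ nonincreasing (strictly for $c>0$); and $g(c)\le\Lambda(1,c)\to-\infty$ as before. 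As written, however, the proposal has a genuine gap at its pivotal step, and with the formula you actually used the claimed statement is false.
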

\begin{lemma}\label{le3.2}
There exist  $L_1, L_2>0$ such that
\[
c \lambda_1(c) < L_1, \lambda_1(c) < L_2, c > c^*.
\]
\end{lemma}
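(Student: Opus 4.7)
The plan is to exploit the identity $\Lambda(\lambda_1(c),c)=0$ to express $c\lambda_1(c)$ in terms of $\lambda_1(c)$, after which the bound on $\lambda_1(c)$ yields the bound on $c\lambda_1(c)$ essentially for free. So the real work is the uniform bound $\lambda_1(c)<L_2$.

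First I would introduce the double root at criticality: let $\lambda^\ast:=\lambda_1(c^\ast)=\lambda_2(c^\ast)$. To bound $\lambda_1(c)$ for $c>c^\ast$, I would evaluate $\Lambda$ at $\lambda^\ast$ for the new parameter $c$:
\[
\Lambda(\lambda^\ast,c)=\Lambda(\lambda^\ast,c^\ast)+(c^\ast-c)\lambda^\ast=-(c-c^\ast)\lambda^\ast<0,
\]
using $\Lambda(\lambda^\ast,c^\ast)=0$ and that $\lambda^\ast>0$ (which follows from Lemma~\ref{le3.1}, since $\Lambda(0,c)=b'(0)-d>0$ by (B2), so neither root can equal $0$). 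By the sign information in Lemma~\ref{le3.1}, $\Lambda(\cdot,c)$ is negative precisely on $(\lambda_1(c),\lambda_2(c))$, hence $\lambda^\ast$ lies strictly in this interval, giving $\lambda_1(c)<\lambda^\ast$. Taking $L_2=\lambda^\ast$ settles the second estimate.

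For the first estimate, I would simply rewrite $\Lambda(\lambda_1(c),c)=0$ as
\[
c\,\lambda_1(c)=\lambda_1(c)^2-d+b'(0)e^{-\lambda_1(c)m}.
\]
Using $0\le\lambda_1(c)<\lambda^\ast$ together with $m\ge 0$ (so that $e^{-\lambda_1(c)m}\le 1$), the right-hand side is bounded above by $(\lambda^\ast)^2+b'(0)-d$, so I would set $L_1=(\lambda^\ast)^2+b'(0)-d$.

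The only step with any content is establishing $\lambda_1(c)<\lambda^\ast$; I expect no genuine obstacle, since the first-order Taylor expansion in $c$ of $\Lambda$ at $(\lambda^\ast,c^\ast)$ combined with the monotone location of the roots does the job immediately. The derivative-based alternative (implicit differentiation to show $\lambda_1'(c)<0$) would also work but is less direct.
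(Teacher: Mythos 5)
Your proof is correct, but it takes a genuinely different route from the paper's. The paper never looks at the critical speed: it plugs the test value $\lambda=2(b'(0)-d)/c$ into $\Lambda$ and notes that $\Lambda(2(b'(0)-d)/c,c)<0$ once $c$ is large, so by Lemma \ref{le3.1} this value lies in $(\lambda_1(c),\lambda_2(c))$, whence $c\lambda_1(c)<2(b'(0)-d)$ for all large $c$, the remaining compact range of speeds being handled implicitly (boundedness of $\lambda_1(c)$ there). You instead anchor the argument at criticality: monotonicity of $\Lambda$ in $c$ gives $\Lambda(\lambda^\ast,c)<0$ for every $c>c^\ast$, hence $\lambda_1(c)<\lambda^\ast$ uniformly, and the characteristic equation then converts this into the bound on $c\lambda_1(c)$. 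Your route buys explicit constants ($L_2=\lambda^\ast$, $L_1=(\lambda^\ast)^2+b'(0)-d$) and a single uniform argument with no large-$c$/moderate-$c$ split; its only cost is the use of the double root $\lambda^\ast=\lambda_1(c^\ast)=\lambda_2(c^\ast)$, whose existence is not literally asserted by Lemma \ref{le3.1} (the lemma is silent about $c=c^\ast$), so you should add the one-line continuity argument: $h(c)=\min_{\lambda\ge 0}\Lambda(\lambda,c)$ is continuous in $c$ (the minimizer satisfies $\partial_\lambda\Lambda=0$ and is therefore bounded on bounded $c$-intervals), is positive for $c<c^\ast$ and negative for $c>c^\ast$, hence $h(c^\ast)=0$ and the minimizer is the required positive double root. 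A final remark: elsewhere in the paper (Lemma \ref{le4.3} and the upper-solution computation) the characteristic function carries $e^{-\lambda c m}$ rather than the printed $e^{-\lambda m}$; your exact identity $\Lambda(\lambda^\ast,c)=\Lambda(\lambda^\ast,c^\ast)+(c^\ast-c)\lambda^\ast$ uses the printed form, but since $\Lambda$ is strictly decreasing in $c$ for $\lambda>0$ in either normalization, the inequality $\Lambda(\lambda^\ast,c)<0$, and with it your whole argument, goes through unchanged.
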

\begin{proof}
Evidently, we have
\[
\Lambda (2(b'(0)-d)/c,c) <0
\]
if $c>0$ is large. Then the result is clear and we complete the proof.
\end{proof}

We now  consider the existence of traveling wave solutions if $c>c^*$ is a constant and define continuous functions as follows
\begin{equation}\label{5}
\overline{\phi }(\xi )=\min \{e^{\lambda _{1}(c)\xi },K\},\underline{\phi }%
(\xi )=\min \{e^{\lambda _{1}(c)\xi }-qe^{\eta \lambda _{1}(c)\xi },0\},
\end{equation}
where
\[
1< \eta< \min\{2, \frac{\lambda_2 (c)}{\lambda_1 (c)}\}, q= \frac{3L_{2}b^{\prime }(0)+L}{- \Lambda (\eta\lambda_1(c),c)}+1+K>1+K.
\]
\begin{lemma}\label{le3.3}
For any $c>c^*,$ there exists $\beta_1 >0$ such that
\begin{eqnarray*}
c|\overline{\phi }(\xi _{1})-\overline{\phi }(\xi _{2})| &\leq &\frac{\beta K}{1+ b^{\prime }(0)KT}|\xi _{1}-\xi _{2}|, \\
c|\underline{\phi }(\xi _{1})-\underline{\phi }(\xi _{2})| &\leq &\frac{\beta K}{1+ b^{\prime }(0)KT}|\xi _{1}-\xi _{2}|
\end{eqnarray*}
for any $\beta \ge \beta_1, c>c^*, \xi_1,\xi_2\in\mathbb{R}.$
\end{lemma}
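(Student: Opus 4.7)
My plan is to bound the Lipschitz constants of $\overline{\phi}$ and $\underline{\phi}$ from above by explicit quantities that, when multiplied by $c$, remain bounded for $c>c^*$, and then simply choose $\beta_1$ large enough for the required inequality to hold. Both $\overline{\phi}$ and $\underline{\phi}$ are piecewise $C^1$ functions obtained by truncating a smooth exponential expression against a constant, each with a single corner; hence each is globally Lipschitz, with Lipschitz constant equal to the essential supremum of its pointwise derivative. Once uniform bounds $\|\overline{\phi}'\|_\infty\leq L^+$ and $\|\underline{\phi}'\|_\infty\leq L^-$ are in hand, I would simply set $\beta_1:=c(1+b'(0)KT)\max\{L^+,L^-\}/K$ and apply the mean value theorem on each piece.

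For $\overline{\phi}$, the derivative equals $\lambda_1(c)e^{\lambda_1(c)\xi}$ on the region $\{e^{\lambda_1(c)\xi}\leq K\}$ and $0$ elsewhere, so $|\overline{\phi}'|\leq \lambda_1(c)K$; Lemma \ref{le3.2} then yields $c|\overline{\phi}'|\leq L_1K$, uniformly in $\xi$.

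For $\underline{\phi}$, the key observation is that the underlying expression $e^{\lambda_1(c)\xi}-qe^{\eta\lambda_1(c)\xi}$ is positive exactly on the half-line $\xi<\xi^*:=-(\ln q)/((\eta-1)\lambda_1(c))$, and on this half-line $e^{\lambda_1(c)\xi}\leq q^{-1/(\eta-1)}$ while $e^{\eta\lambda_1(c)\xi}\leq q^{-\eta/(\eta-1)}$. Differentiating the exponential difference on its positivity set and applying these pointwise bounds gives
\[
|\underline{\phi}'(\xi)|\leq \lambda_1(c)e^{\lambda_1(c)\xi}+q\eta\lambda_1(c)e^{\eta\lambda_1(c)\xi}\leq (1+\eta)\lambda_1(c)q^{-1/(\eta-1)},
\]
and multiplying by $c$ and invoking Lemma \ref{le3.2} gives $c|\underline{\phi}'|\leq (1+\eta)L_1q^{-1/(\eta-1)}$.

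The main obstacle is the potentially misleading $q$ factor in front of $\eta\lambda_1(c)e^{\eta\lambda_1(c)\xi}$: because $q$ is large (larger than $1+K$), a naive bound on all of $\mathbb{R}$ fails. The remedy is to work only on the support $\xi<\xi^*$, which converts the troublesome $q$ into the innocuous $q^{-1/(\eta-1)}<1$. With both derivative bounds in hand, the choice of $\beta_1$ above completes the argument.
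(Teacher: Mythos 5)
Your proposal is correct and follows essentially the same route as the paper: bound $c$ times the derivative of each truncated exponential on its smooth piece, use Lemma \ref{le3.2} to make the bound uniform in $c>c^*$, and then fix $\beta_1$ so that $\frac{\beta_1 K}{1+b'(0)KT}$ dominates these Lipschitz constants. Your handling of the large factor $q$ via the positivity threshold $\xi^*$ (giving $qe^{\eta\lambda_1(c)\xi}\le q^{-1/(\eta-1)}<1$ on the support) is in fact a slightly more explicit version of the paper's estimate, which simply absorbs $q e^{\eta\lambda_1(c)\xi}$ on the positivity set to reach the same uniform bound of order $3L_1$.
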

\begin{proof}
Let $L_1$ be defined by Lemma \ref{le3.2}. If $\overline{\phi }(\xi )=e^{\lambda _{1}(c)}<K,$ then%
\[
c\overline{\phi }^{\prime }(\xi )=c\lambda _{1}(c)e^{\lambda _{1}(c)\xi
}<L_{1}K.
\]%
When $\overline{\phi }(\xi )=K<e^{\lambda _{1}(c)},$ then $c\overline{\phi }%
^{\prime }(\xi )=0$ and
\[
c|\overline{\phi }(\xi _{1})-\overline{\phi }(\xi _{2})| \leq  3L_{1}K |\xi _{1}-\xi _{2}|, \xi_1,\xi_2\in\mathbb{R}.
\]

If $\underline{\phi }(\xi )=e^{\lambda _{1}(c)}-qe^{\eta \lambda _{1}(c)}>0,$
then
\begin{eqnarray*}
\left\vert c\underline{\phi }^{\prime }(\xi )\right\vert  &=& c \left\vert
\lambda _{1}(c)e^{\lambda _{1}(c)\xi }-q\eta \lambda _{1}(c)e^{\eta \lambda
_{1}(c)\xi }\right\vert  \\
&\leq & c \left\vert \lambda _{1}(c)e^{\lambda _{1}(c)\xi }\right\vert
+c \left\vert q\eta \lambda _{1}(c)e^{\eta \lambda _{1}(c)\xi }\right\vert  \\
&<&3L_{1}K,
\end{eqnarray*}%
which implies that
\[
c|\underline{\phi }(\xi _{1})-\underline{\phi }(\xi _{2})| \leq   3L_{1}K |\xi _{1}-\xi _{2}|, \xi_1,\xi_2\in\mathbb{R}.
\]
Let
\[
3L_{1}K=\frac{\beta_1 K}{1+ b^{\prime }(0)KT}.
\]
Then the proof is complete.
\end{proof}
\begin{remark}\label{re1}
{\rm
By what we have done,  we can \emph{fix} $\beta >d$ such that Lemmas \ref{le2.1}-\ref{le3.3}
hold for each \emph{fixed}  $c>c^*.$}
\end{remark}

\begin{lemma}\label{le3.4}
If $\overline{\phi }(\xi )$ and $ \underline{\phi }(\xi )$ are defined by \eqref{5}, then $\overline{\phi }(\xi )$ is an upper solution of \eqref{2} while $\underline{\phi }(\xi )$ is a lower solution of \eqref{2}.
\end{lemma}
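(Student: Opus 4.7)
The plan is to verify the two differential inequalities in Definition \ref{de2} by a case split corresponding to the two branches of the piecewise definitions in \eqref{5}. On each branch $\overline{\phi}$ and $\underline{\phi}$ are smooth, and the only exceptional points are the single transitions where $e^{\lambda_1(c)\xi}=K$ and where $e^{\lambda_1(c)\xi}=q\,e^{\eta\lambda_1(c)\xi}$; these are admissible as the $T_i$ of Definition \ref{de2}.

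For $\overline{\phi}$, on the region where $\overline{\phi}(\xi)=e^{\lambda_1(c)\xi}$, direct differentiation yields $\overline{\phi}''-c\overline{\phi}'-d\overline{\phi}=(\lambda_1^2-c\lambda_1-d)\,e^{\lambda_1\xi}$. Using (B2), the pointwise bound $\overline{\phi}\le e^{\lambda_1(c)\cdot}$, and $\tau(\overline{\phi}(\xi))\ge m$ from (A3), one gets $b(\overline{\phi}(\xi-c\tau(\overline{\phi}(\xi))))\le b'(0)e^{-\lambda_1 cm}e^{\lambda_1\xi}$, and summing the two contributions produces $\Lambda(\lambda_1(c),c)\,e^{\lambda_1\xi}=0$ exactly. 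On the region $\overline{\phi}=K$, the derivatives vanish, the delayed argument still satisfies $\overline{\phi}(\cdot)\le K$, and the monotonicity of $b$ with the equilibrium relation at $u=K$ closes the upper-solution inequality.

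For $\underline{\phi}$, the trivial branch $\underline{\phi}=0$ is immediate since $b(\underline{\phi}(\xi-c\tau))\ge 0$. The substantive branch is $\underline{\phi}(\xi)=e^{\lambda_1\xi}-q\,e^{\eta\lambda_1\xi}>0$, which confines $\xi$ below a negative threshold and keeps $\underline{\phi}(\xi)$ small. Differentiating and linearizing the delayed nonlinearity at the baseline delay produces the clean main part $\Lambda(\lambda_1,c)\,e^{\lambda_1\xi}-q\Lambda(\eta\lambda_1,c)\,e^{\eta\lambda_1\xi}=-q\Lambda(\eta\lambda_1,c)\,e^{\eta\lambda_1\xi}$, which is strictly positive by $\Lambda(\eta\lambda_1(c),c)<0$ (this is where $\eta<\lambda_2(c)/\lambda_1(c)$ is used). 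The residual errors come from two sources: the quadratic correction $b(u)-b'(0)u\ge -Lu^2$ from (B4), which is $O(L\,e^{2\lambda_1\xi})$; and the state-dependence of $\tau$, where (A2) and the mean value theorem give $|\tau(\underline{\phi}(\xi))-m|\le T\underline{\phi}(\xi)\le Te^{\lambda_1\xi}$ and, combined with Lemma \ref{le3.2}, produce a contribution of order $L_2 b'(0)\,e^{2\lambda_1\xi}$. Since $\eta<2$, one has $e^{(2-\eta)\lambda_1\xi}\le 1$ on the admissible region, and the constant $q\ge (3L_2 b'(0)+L)/(-\Lambda(\eta\lambda_1(c),c))+1+K$ was chosen precisely so that $-q\Lambda(\eta\lambda_1,c)\,e^{\eta\lambda_1\xi}$ absorbs both error terms.

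The hard part is this last lower-solution estimate: one must combine (B4) with a mean-value bound for the $\tau$-perturbation while keeping careful track of which exponential decay rate is dominant, and it is the only place where the specific form of $q$ is forced. The uniform bounds $c\lambda_1(c)<L_1$ and $\lambda_1(c)<L_2$ of Lemma \ref{le3.2} render these estimates (and therefore $q$) uniform over $c>c^*$, which is what will permit passing to the limit $c\downarrow c^*$ later in the paper.
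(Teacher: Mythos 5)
Your proposal is correct and follows essentially the same route as the paper: the same branch-by-branch verification, with the upper solution reduced to $\Lambda(\lambda_1(c),c)=0$ via $b(u)\le b'(0)u$ and $\tau\ge m$, the constant branch closed by $b(K)=dK$, and the lower solution on the positive branch handled exactly as in the paper by combining (B4) with a mean-value estimate of the $\tau$-perturbation (using $|\tau(\underline{\phi}(\xi))-m|\le T\underline{\phi}(\xi)$ and the derivative bound of order $\lambda_1(c)e^{\lambda_1(c)\xi}$, controlled by Lemma \ref{le3.2}), with $\eta<\min\{2,\lambda_2(c)/\lambda_1(c)\}$ and the choice of $q$ absorbing both error terms into $-q\Lambda(\eta\lambda_1(c),c)e^{\eta\lambda_1(c)\xi}>0$. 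No substantive differences from the paper's argument.
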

\begin{proof}
If $\overline{\phi }(\xi )=e^{\lambda _{1}(c)\xi }<K,$ then
\begin{eqnarray*}
&&{\overline{\phi }}^{\prime \prime }(\xi )-c{\overline{\phi }}^{\prime
}(\xi )-d\overline{\phi }(\xi )+b(\overline{\phi }(\xi -c\tau (\overline{%
\phi }(\xi )))) \\
&\leq &{\overline{\phi }}^{\prime \prime }(\xi )-c{\overline{\phi }}^{\prime
}(\xi )-d\overline{\phi }(\xi )+b^{\prime }(0)\overline{\phi }(\xi -c\tau (%
\overline{\phi }(\xi ))) \\
&\leq &{\overline{\phi }}^{\prime \prime }(\xi )-c{\overline{\phi }}^{\prime
}(\xi )-d\overline{\phi }(\xi )+b^{\prime }(0)\overline{\phi }(\xi -cm) \\
&=&e^{\lambda _{1}(c)\xi }\Delta (\lambda _{1}(c),c) \\
&=&0.
\end{eqnarray*}%
When $\overline{\phi }(\xi )=K<e^{\lambda _{1}(c)\xi },$ then $\overline{%
\phi }(\xi -c\tau (\overline{\phi }(\xi )))\leq K$ and%
\begin{eqnarray*}
&&{\overline{\phi }}^{\prime \prime }(\xi )-c{\overline{\phi }}^{\prime
}(\xi )-d\overline{\phi }(\xi )+b(\overline{\phi }(\xi -c\tau (\overline{%
\phi }(\xi )))) \\
&\leq &-dK+b(K)=0.
\end{eqnarray*}

If $\underline{\phi }(\xi )=0>e^{\lambda _{1}(c)\xi }-qe^{\eta \lambda
_{1}(c)\xi },$ then%
\begin{eqnarray*}
&&\underline{{\phi }}^{\prime \prime }(\xi )-c\underline{{\phi }}^{\prime
}(\xi )-d\underline{\phi }(\xi )+b(\underline{\phi }(\xi -c\tau (\underline{%
\phi }(\xi )))) \\
&=&\underline{{\phi }}^{\prime \prime }(\xi )-c\underline{{\phi }}^{\prime
}(\xi )-d\underline{\phi }(\xi )+b(\underline{\phi }(\xi -cm)) \\
&\ge &0.
\end{eqnarray*}
If $\underline{\phi }(\xi )=e^{\lambda _{1}(c)\xi }-qe^{\eta \lambda
_{1}(c)\xi }>0$, then%
\begin{eqnarray*}
\left\vert \underline{\phi }^{\prime }(\xi )\right\vert  &=&\left\vert
\lambda _{1}(c)e^{\lambda _{1}(c)\xi }-q\eta \lambda _{1}(c)e^{\eta \lambda
_{1}(c)\xi }\right\vert  \\
&<&\lambda _{1}(c)e^{\lambda _{1}(c)\xi }+\eta \lambda _{1}(c)qe^{\eta
\lambda _{1}(c)\xi } \\
&<&3\lambda _{1}(c)e^{\lambda _{1}(c)\xi }
\end{eqnarray*}%
and%
\begin{eqnarray*}
&&\underline{{\phi }}^{\prime \prime }(\xi )-c\underline{{\phi }}^{\prime
}(\xi )-d\underline{\phi }(\xi )+b(\underline{\phi }(\xi -c\tau (\underline{%
\phi }(\xi )))) \\
&\geq &\underline{{\phi }}^{\prime \prime }(\xi )-c\underline{{\phi }}%
^{\prime }(\xi )-d\underline{\phi }(\xi )+b^{\prime }(0)\underline{\phi }%
(\xi -c\tau (\underline{\phi }(\xi ))) \\
&&-L\underline{\phi }^{2}(\xi -c\tau (\underline{\phi }(\xi ))) \\
&=&\underline{{\phi }}^{\prime \prime }(\xi )-c\underline{{\phi }}^{\prime
}(\xi )-d\underline{\phi }(\xi )+b^{\prime }(0)\underline{\phi }(\xi -cm) \\
&&+b^{\prime }(0)\left[ \underline{\phi }(\xi -c\tau (\underline{\phi }(\xi
)))-\underline{\phi }(\xi -cm)\right] -L\underline{\phi }^{2}(\xi -c\tau (%
\underline{\phi }(\xi ))) \\
&\geq &-\Delta (\eta \lambda _{1}(c),c)qe^{\eta \lambda _{1}(c)\xi } \\
&&-3\lambda _{1}(c)b^{\prime }(0)e^{2\lambda _{1}(c)\xi }-Le^{2\lambda
_{1}(c)\xi } \\
&\geq &-\Delta (\eta \lambda _{1}(c),c)qe^{\eta \lambda _{1}(c)\xi
}-(3L_{2}b^{\prime }(0)+L)e^{2\lambda _{1}(c)\xi } \\
&>&0
\end{eqnarray*}%
by the definition of $q.$ The proof is complete.
\end{proof}

\subsection{Existence of Monotone Traveling Wave Solutions: $c>c^*$}

In this part, we assume that $c>c^*$ is fixed and prove the existence of fixed points of $F$ by Schauder's fixed point theorem. Moreover, $\beta >0$ is a fixed constant satisfying Remark \ref{re1}.  Let $\mu \in (0, - \gamma_1(c))$ be a constant and
\[
B_\mu \left( \mathbb{R},\mathbb{R}\right) =\left\{ \phi \in C\left( %
\mathbb{R},\mathbb{R}\right) :\sup_{\xi\in \mathbb{R}}\left| \phi
(t)\right| e^{-\mu \left| \xi\right| }<\infty \right\}
\]
and
\[
\left| \phi \right| _\mu =\sup_{\xi\in \mathbb{R}}\left| \phi
(\xi)\right|
e^{-\mu \left| \xi\right| }\text{ for }\phi \in B_\mu \left( \mathbb{R},%
\mathbb{R}\right) .
\]
Then it is easy to show that $\left( B_\mu \left( \mathbb{R},\mathbb{R}%
\right) ,\left| \cdot \right| _\mu \right) $ is a Banach space.

Define
\[
\Gamma \left( \underline{\phi },\overline{\phi }\right) =\left\{
\phi (\xi)\in B_\mu \left( \mathbb{R},\mathbb{R}\right):
\begin{array}{l}
(i)\text{ }\underline{\phi }(\xi)\leq \phi (\xi)\leq \overline{\phi }(\xi); \\
(ii)\text{ }\phi (\xi)\text{ is nondecreasing;} \\
(iii)\text{ } c |\phi (\xi_1)-\phi(\xi_2)| \le \frac{\beta K}{1+ b^{\prime }(0)KT} |\xi_1-\xi_2|, \xi_1,\xi_2\in \mathbb{R}
\end{array}
\right\}.
\]

By what we have done, we see that $\Gamma$ is nonempty and convex. Moreover, we can verify that $\Gamma$ is closed and bounded with respect to the norm $|\cdot|_{\mu}.$
\begin{lemma}\label{le3.6}
$F$ admits the following nice properties.
\begin{description}
  \item[(1)] If $\phi(\xi)\in \Gamma,$ then $F(\phi)(\xi)$ is monotone increasing.
  \item[(2)] If $\phi_1(\xi),\phi_2(\xi)\in \Gamma$ with $\phi_1(\xi)\le \phi_2(\xi),$ then $F(\phi_1)(\xi)\le F(\phi_2)(\xi).$
  \item[(3)] If $\phi(\xi)\in \Gamma,$ then $c |(F(\phi)(\xi))'| <\frac{\beta K}{1+ b^{\prime }(0)KT}.$
\end{description}
\end{lemma}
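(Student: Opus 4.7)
My plan is to derive all three claims by combining the estimations of Section~2.1 with the change of variable $r=s-\xi$ in the integral defining $F$. That substitution rewrites
\[
F(\phi)(\xi)=\frac{1}{\gamma_2(c)-\gamma_1(c)}\left[\int_{-\infty}^{0} e^{-\gamma_1(c) r} H(\phi)(\xi+r)\,dr+\int_{0}^{\infty} e^{-\gamma_2(c) r} H(\phi)(\xi+r)\,dr\right],
\]
and since $\gamma_1(c)<0<\gamma_2(c)$ both exponential kernels are strictly positive and integrable on their respective half-lines. In this form $F(\phi)(\xi)$ is a positively weighted average of shifted values of $H(\phi)$, so any monotonicity of $H(\phi)$ in $\xi$ or in $\phi$ transfers verbatim to $F(\phi)$.

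With that shifted representation in hand, each part reduces to a single invocation of an earlier lemma. For Part~(3), every $\phi\in\Gamma$ belongs to $C_{[0,K]}$ by condition~(i), and Remark~\ref{re1} fixes $\beta$ so that $4\beta\ge (1+b'(0)KT)^2 c^2+4d$; Lemma~\ref{le2.2} then delivers the bound $|cF(\phi)'(\xi)|<\frac{\beta K}{1+b'(0)KT}$ directly. For Part~(1), condition~(ii) makes $\phi$ nondecreasing and condition~(iii) supplies the Lipschitz estimate $c|\phi(\xi_1)-\phi(\xi_2)|\le\frac{\beta K}{1+b'(0)KT}|\xi_1-\xi_2|$ required by Lemma~\ref{le2.4}, so $H(\phi)(\xi)$ is nondecreasing in $\xi$, and positivity of the kernels turns this into the monotonicity of $F(\phi)(\xi)$. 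For Part~(2), given $\phi_1,\phi_2\in\Gamma$ with $\phi_1\le\phi_2$, both Lipschitz bounds demanded by Lemma~\ref{le2.3} are furnished by condition~(iii), hence $H(\phi_1)(\xi)\le H(\phi_2)(\xi)$ pointwise, and the representation yields $F(\phi_1)\le F(\phi_2)$.

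I do not anticipate a substantial obstacle here: no genuinely new estimate is required. The only point worth a line of verification is that membership in $\Gamma$ simultaneously supplies the Lipschitz hypothesis common to Lemmas~\ref{le2.2}--\ref{le2.4} and, for Part~(1), the additional monotonicity hypothesis of Lemma~\ref{le2.4}; both are built into the defining conditions of $\Gamma$. The main conceptual step is thus the change of variable that recasts $F$ as a monotonicity-preserving integral transform, after which the three parts follow as immediate corollaries of Lemmas~\ref{le2.2}, \ref{le2.4}, and~\ref{le2.3}, respectively.
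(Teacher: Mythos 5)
Your proposal is correct and follows exactly the route the paper intends: the paper omits the details, declaring the lemma ``evident'' from the earlier estimates, and your change of variable $r=s-\xi$ together with the positivity and integrability of the kernels is precisely the standard argument that makes Lemmas \ref{le2.2}--\ref{le2.4} (with $\beta$ fixed as in Remark \ref{re1}) transfer to $F$. The only noteworthy point is that you correctly invoke Lemma \ref{le2.4} for part (1), whereas the paper's citation of Lemmas \ref{le2.1}--\ref{le2.3} overlooks that the monotonicity-in-$\xi$ statement is the one actually needed there.
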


Lemma \ref{le3.6} is evident by Lemmas \ref{le2.1}-\ref{le2.3}, and we omit the proof details here.

\begin{lemma}\label{le3.7}
$F: \Gamma \to \Gamma.$
\end{lemma}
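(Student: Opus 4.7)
The plan is to check, for arbitrary $\phi\in\Gamma$, each of the three defining conditions (i)--(iii) of $\Gamma$ for the image $F(\phi)$. Conditions (ii) and (iii) are essentially recorded in the preceding Lemma~\ref{le3.6}: part~(1) gives monotonicity of $F(\phi)$ in $\xi$, and part~(3) gives the pointwise derivative bound $c|(F(\phi))'(\xi)|<\frac{\beta K}{1+b'(0)KT}$, which integrates to the Lipschitz estimate demanded by (iii). So the genuine content of the lemma lies in verifying the sandwich condition (i),
\[
\underline{\phi}(\xi)\le F(\phi)(\xi)\le \overline{\phi}(\xi),\qquad \xi\in\mathbb{R}.
\]

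To establish (i), I would first apply the monotonicity of $F$ (Lemma~\ref{le3.6}(2), used on the pairs $(\underline{\phi},\phi)$ and $(\phi,\overline{\phi})$; the Lipschitz hypothesis underlying Lemma~\ref{le2.3} is met for all three functions by Lemma~\ref{le3.3}) to reduce (i) to the two endpoint inequalities $\underline{\phi}\le F(\underline{\phi})$ and $F(\overline{\phi})\le \overline{\phi}$. These follow from Lemma~\ref{le3.4} combined with the explicit kernel representation~\eqref{3.5} of $F$. Indeed, $F$ inverts the linear operator $L\psi:=\psi''-c\psi'-\beta\psi$ on bounded functions via the positive Green kernel in~\eqref{3.5}, so the upper-solution inequality $\overline{\phi}''-c\overline{\phi}'-d\overline{\phi}+b(\overline{\phi}(\xi-c\tau(\overline{\phi}(\xi))))\le 0$, rewritten as $L\overline{\phi}+H(\overline{\phi})\le 0$, shows that $w:=\overline{\phi}-F(\overline{\phi})$ satisfies $Lw\le 0$ off the corner points. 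Since $\beta>d>0$, a standard maximum-principle argument on the bounded continuous function $w$ then yields $w\ge 0$; the symmetric argument applied to $F(\underline{\phi})-\underline{\phi}$ gives the lower endpoint. Chaining these gives $\underline{\phi}\le F(\underline{\phi})\le F(\phi)\le F(\overline{\phi})\le \overline{\phi}$, which is~(i).

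The main obstacle is the piecewise-smooth nature of the upper and lower solutions in~\eqref{5}: at the corner points the second derivatives of $\overline{\phi}$ and $\underline{\phi}$ exist only as one-sided limits. This is not serious, because at each such corner the one-sided derivatives jump in the direction favorable to a maximum-principle argument (downward for the upper solution, upward for the lower). One may either interpret $Lw\le 0$ distributionally with a nonpositive singular part, or split $\mathbb{R}$ into smooth pieces and compare $\overline{\phi}(\xi)$ directly with $F(\overline{\phi})(\xi)$ using the explicit Green function $\frac{1}{\gamma_2(c)-\gamma_1(c)}e^{\gamma_i(c)(\xi-s)}$ from~\eqref{3.5}; both routes reduce to the same sign check and require only careful bookkeeping.
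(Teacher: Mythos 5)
Your proposal is correct, and its skeleton coincides with the paper's: conditions (ii) and (iii) are delegated to Lemma \ref{le3.6}, and the sandwich condition (i) is reduced, via the monotonicity of $F$ on $\Gamma$ (with the Lipschitz hypothesis supplied by Lemma \ref{le3.3}), to the two endpoint inequalities $\underline{\phi}\le F(\underline{\phi})$ and $F(\overline{\phi})\le\overline{\phi}$. Where you diverge is in how those endpoint inequalities are checked. The paper takes the second of your two suggested routes: it bounds $H(\overline{\phi})(s)\le\beta\overline{\phi}(s)+c\overline{\phi}'(s)-\overline{\phi}''(s)$ using the upper-solution inequality of Lemma \ref{le3.4} and then integrates the explicit kernel in \eqref{3.5} by parts over the two smooth pieces, which produces $\overline{\phi}(\xi)$ minus an explicit nonnegative corner term of the form $\min\{\lambda_1(c)Ke^{\gamma_2(c)(\xi-\ln K/\lambda_1(c))},\lambda_1(c)Ke^{\gamma_1(c)(\xi-\ln K/\lambda_1(c))}\}/(\gamma_2(c)-\gamma_1(c))$, so the favorable sign of the derivative jump appears concretely rather than through a maximum principle. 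Your primary route --- the maximum principle applied to $w=\overline{\phi}-F(\overline{\phi})$ with $Lw\le 0$ off the corners --- is a legitimate alternative and is more conceptual, but as stated it needs one more ingredient that you only gesture at with ``standard'': since the domain is all of $\mathbb{R}$, you must either observe that $w\to 0$ at $\pm\infty$ (which does hold here, by dominated convergence in \eqref{3.5}, because $H(\overline{\phi})(s)\to 0$ as $s\to-\infty$ and $\to\beta K$ as $s\to+\infty$), so a negative infimum is attained at a finite, non-corner point and contradicts $\beta>0$, or else perturb by $\epsilon\cosh(\delta\xi)$ with $\delta^2+c\delta<\beta$; your observation about the sign of the derivative jumps at the corners is correct and is exactly what rules the corners out as interior minima. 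In short: same reduction, same use of the upper/lower solutions; the paper buys explicitness (and sidesteps any maximum-principle discussion on an unbounded domain) by computing the integral, while your argument is shorter once the behavior at infinity is pinned down.
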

\begin{proof}
From Lemma \ref{le3.6}, it suffices to verify that
\[
\underline{\phi } (\xi) \le F(\underline{\phi})(\xi)\le F(\overline{\phi})(\xi)\le \overline{\phi }(\xi), \xi\in \mathbb{R}.
\]

By the definition of upper solution, we see that
\begin{eqnarray*}
&&F\left( \overline{\phi }\right) (\xi ) \\
&=&\frac{1}{\gamma _{2}(c)-\gamma _{1}(c)}\left[ \int_{-\infty }^{\xi
}e^{\gamma _{1}(c)(\xi -s)}+\int_{\xi }^{\infty }e^{\gamma _{2}(c)(\xi -s)}%
\right] H\left( \overline{\phi }\right) (s)ds \\
&\leq &\frac{1}{\gamma _{2}(c)-\gamma _{1}(c)}\left[ \int_{-\infty }^{\xi
}e^{\gamma _{1}(c)(\xi -s)}+\int_{\xi }^{\infty }e^{\gamma _{2}(c)(\xi -s)}%
\right] \left( \beta \overline{\phi }(s)+c\overline{\phi }^{\prime }(s)-%
\overline{\phi }^{\prime \prime }(s)\right) ds \\
&=&\underline{\phi }(\xi )-\frac{\min \left\{ \lambda _{1}(c)Ke^{\gamma
_{2}(c)\left( \xi -\frac{\ln K}{\lambda _{1}(c)}\right) },\lambda
_{1}(c)Ke^{\gamma _{1}(c)\left( \xi -\frac{\ln K}{\lambda _{1}(c)}\right)
}\right\} }{\gamma _{2}(c)-\gamma _{1}(c)} \\
&\leq &\underline{\phi }(\xi ),\text{ }\xi \neq \frac{\ln K}{\lambda _{1}(c)}%
.
\end{eqnarray*}
Then the continuity implies that
\[
\underline{\phi } (\xi) \le F(\underline{\phi})(\xi), \xi\in \mathbb{R}.
\]

In a similar way, we can prove that
\[
F(\overline{\phi})(\xi)\le \overline{\phi }(\xi), \xi\in\mathbb{R}.
\]
The proof is complete.
\end{proof}

\begin{lemma}\label{le3.8}
$F: \Gamma \to \Gamma$ is complete continuous in the sense of $|\cdot|_{\mu}.$
\end{lemma}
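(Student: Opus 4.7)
The plan is to verify complete continuity in the usual two steps: (i) $F$ is continuous from $\Gamma$ to $\Gamma$ in the weighted norm $|\cdot|_\mu$, and (ii) $F(\Gamma)$ is precompact in $(B_\mu(\mathbb{R},\mathbb{R}),|\cdot|_\mu)$.

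For (i), fix $\phi_1,\phi_2\in\Gamma$ and estimate $H(\phi_1)-H(\phi_2)$ pointwise. Writing $H(\phi)(\xi)=(\beta-d)\phi(\xi)+b(\phi(\xi-c\tau(\phi(\xi))))$, the linear part contributes at most $(\beta-d)|\phi_1(\xi)-\phi_2(\xi)|$, and for the nonlinear piece I would split
\[
b(\phi_1(\xi-c\tau(\phi_1(\xi))))-b(\phi_2(\xi-c\tau(\phi_2(\xi))))
\]
into
\[
\bigl[b(\phi_1(\xi-c\tau(\phi_1(\xi))))-b(\phi_2(\xi-c\tau(\phi_1(\xi))))\bigr]+\bigl[b(\phi_2(\xi-c\tau(\phi_1(\xi))))-b(\phi_2(\xi-c\tau(\phi_2(\xi))))\bigr].
\]
The first bracket is dominated by $b'(0)|\phi_1-\phi_2|$ at the shifted point, which (using $|\xi-c\tau(\phi_1(\xi))|\le|\xi|+cM$ from (A3)) is at most $b'(0)e^{\mu cM}|\phi_1-\phi_2|_\mu e^{\mu|\xi|}$. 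The second bracket is handled by combining property (iii) in the definition of $\Gamma$ (Lipschitz bound $\beta K/[c(1+b'(0)KT)]$ on $\phi_2$) with (A2), giving $|\tau(\phi_1(\xi))-\tau(\phi_2(\xi))|\le T|\phi_1(\xi)-\phi_2(\xi)|$. Collecting terms yields a pointwise bound
\[
|H(\phi_1)(\xi)-H(\phi_2)(\xi)|\le C_1\,|\phi_1-\phi_2|_\mu\, e^{\mu|\xi|}.
\]
Substituting into (\ref{3.5}) and using that the choice $\mu\in(0,-\gamma_1(c))$ makes both $\int_{-\infty}^\xi e^{\gamma_1(c)(\xi-s)}e^{\mu|s|}\,ds$ and $\int_\xi^\infty e^{\gamma_2(c)(\xi-s)}e^{\mu|s|}\,ds$ convergent and of order $e^{\mu|\xi|}$, I obtain $|F(\phi_1)-F(\phi_2)|_\mu\le C_2|\phi_1-\phi_2|_\mu$, i.e.\ Lipschitz continuity in $|\cdot|_\mu$.

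For (ii), $F(\Gamma)\subset\Gamma$ by Lemma \ref{le3.7}, so every member is uniformly bounded by $K$ and, by Lemma \ref{le3.6}(3), uniformly Lipschitz with constant $\beta K/[c(1+b'(0)KT)]$. Given a sequence $\{F(\phi_n)\}$, the Arzel\`a--Ascoli theorem together with a diagonal extraction produces a subsequence (still denoted $F(\phi_n)$) converging uniformly on every bounded interval to some $\psi$. To upgrade to convergence in $|\cdot|_\mu$, combine uniform convergence on $[-R,R]$ with the tail estimate
\[
\sup_n\sup_{|\xi|\ge R}|F(\phi_n)(\xi)-\psi(\xi)|\,e^{-\mu|\xi|}\le 2K e^{-\mu R}\longrightarrow 0\ \text{as}\ R\to\infty,
\]
which yields $|F(\phi_n)-\psi|_\mu\to 0$. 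The only non-routine ingredient is the continuity estimate on $H$: because $\tau$ is state-dependent, the direct constant-delay argument of Wu--Zou \cite{wuzou2} does not apply, and one must exploit both (A2) and the Lipschitz constraint built into $\Gamma$ via the two-term splitting above. Everything else is a standard weighted-space Arzel\`a--Ascoli argument.
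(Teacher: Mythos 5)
Your argument is correct and follows essentially the same route as the paper: the continuity step uses the identical two-term splitting of $b(\phi_1(\xi-c\tau(\phi_1(\xi))))-b(\phi_2(\xi-c\tau(\phi_2(\xi))))$, bounding the first bracket by $b'(0)e^{\mu cM}|\phi_1-\phi_2|_\mu e^{\mu|\xi|}$ via (A3) and the second via (A2) together with the Lipschitz constraint (iii) built into $\Gamma$, then integrating against the kernels with $\mu\in(0,-\gamma_1(c))$ to get a Lipschitz bound in $|\cdot|_\mu$. For compactness you simply spell out the standard weighted Arzel\`a--Ascoli plus tail estimate argument that the paper states is easy and omits, so there is no substantive difference.
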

\begin{proof}
If $\phi_1,\phi_2\in \Gamma,$ then
\begin{eqnarray*}
&&\left\vert b(\phi _{1}(\xi -c\tau (\phi _{1}(\xi ))))-b(\phi _{2}(\xi
-c\tau (\phi _{2}(\xi ))))\newline
\right\vert \\
&\leq &\left\vert b(\phi _{1}(\xi -c\tau (\phi _{1}(\xi ))))-b(\phi _{2}(\xi
-c\tau (\phi _{1}(\xi ))))\newline
\right\vert \\
&&+\left\vert b(\phi _{2}(\xi -c\tau (\phi _{1}(\xi ))))-b(\phi _{2}(\xi
-c\tau (\phi _{2}(\xi ))))\right\vert \\
&\leq &b^{\prime }(0)\left\vert \phi _{2}(\xi -c\tau (\phi _{1}(\xi )))-\phi
_{1}(\xi -c\tau (\phi _{1}(\xi )))\right\vert \\
&& + b^{\prime }(0)\beta KcT\left\vert \phi _{2}(\xi )-\phi _{1}(\xi
)\right\vert
\end{eqnarray*}
and so
\begin{eqnarray*}
&&\left\vert F(\phi _{1})(\xi )-F(\phi _{2})(\xi )\right\vert  \\
&\leq &\frac{1}{\gamma _{2}(c)-\gamma _{1}(c)}\left[ \int_{-\infty }^{\xi
}e^{\gamma _{1}(c)(\xi -s)}ds+\int_{\xi }^{\infty }e^{\gamma _{2}(c)(\xi -s)}%
\right] \left\vert H(\phi _{1})(s)-H(\phi _{2})(s)\right\vert ds \\
&\leq &\frac{1}{\gamma _{2}(c)-\gamma _{1}(c)}\left[ \int_{-\infty }^{\xi
}e^{\gamma _{1}(c)(\xi -s)}ds+\int_{\xi }^{\infty }e^{\gamma _{2}(c)(\xi -s)}%
\right]  \\
&&\left[ \left( b^{\prime }(0)\beta KcT+\beta -d\right) \left\vert \phi
_{2}(s)-\phi _{1}(s)\right\vert +\right.  \\
&&\left. b^{\prime }(0)\left\vert \phi _{2}(s-c\tau (\phi _{1}(s)))-\phi
_{1}(s-c\tau (\phi _{1}(s)))\right\vert \right] ds.
\end{eqnarray*}%
Applying these estimations, we have
\begin{eqnarray*}
&&\left\vert F(\phi _{1})(\xi )-F(\phi _{2})(\xi )\right\vert e^{-\mu
\left\vert \xi \right\vert } \\
&\leq &\frac{1}{\gamma _{2}(c)-\gamma _{1}(c)}\left[ \int_{-\infty }^{\xi
}e^{(\gamma _{1}(c)+\mu )(\xi -s)}+\int_{\xi }^{\infty }e^{(\gamma
_{2}(c)-\mu )(\xi -s)}\right]  \\
&&\left( b^{\prime }(0)\beta KcT+\beta -d\right) \left\vert \phi
_{2}(s)-\phi _{1}(s)\right\vert e^{-\mu \left\vert s\right\vert }ds \\
&&+\frac{e^{\mu cM}b^{\prime }(0)}{\gamma _{2}(c)-\gamma _{1}(c)}\left[
\int_{-\infty }^{\xi }e^{(\gamma _{1}(c)+\mu )(\xi -s)}ds+\int_{\xi
}^{\infty }e^{(\gamma _{2}(c)-\mu )(\xi -s)}\right]  \\
&&\left\vert \phi _{2}(s-c\tau (\phi _{1}(s)))-\phi _{1}(s-c\tau (\phi
_{1}(s)))\right\vert e^{-\mu \left\vert s-c\tau (\phi _{1}(s))\right\vert }ds
\\
&\leq &\frac{b^{\prime }(0)\beta KcT+\beta -d+e^{\mu cM}b^{\prime }(0)}{%
\gamma _{2}(c)-\gamma _{1}(c)}\left[ \frac{1}{\gamma _{2}(c)-\mu }-\frac{1}{%
\gamma _{1}(c)+\mu }\right] \left\vert \phi _{2}-\phi _{1}\right\vert _{\mu
},
\end{eqnarray*}
which implies the continuity in the sense of $|\cdot |_{\mu}$.

Due to Lemma \ref{le2.1}, it is easy to prove the compactness and we omit the details. The proof is complete.
\end{proof}

We now present the main conclusion of this part.
\begin{theorem}\label{th2}
For every $c>c^*,$ \eqref{2} with \eqref{3} has a monotone solution.
\end{theorem}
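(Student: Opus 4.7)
The plan is to apply Schauder's fixed point theorem to the completely continuous self-map $F:\Gamma\to\Gamma$ supplied by Lemmas \ref{le3.7}--\ref{le3.8}, and then to check that the resulting fixed point satisfies the asymptotic conditions \eqref{3}.

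First, I would confirm that $\Gamma(\underline{\phi},\overline{\phi})$ is a nonempty, closed, bounded, convex subset of $(B_\mu(\mathbb{R},\mathbb{R}),|\cdot|_\mu)$. Non-emptiness is witnessed by $\overline{\phi}$ itself (monotone, within the prescribed pointwise bounds, and Lipschitz with the required constant by Lemma \ref{le3.3}), while convexity is transparent since each of the three defining conditions survives convex combinations. Boundedness and closedness in the weighted norm follow from the pointwise bounds $0\le\phi\le K$ together with the uniform Lipschitz estimate built into condition (iii).

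Second, I would invoke Schauder's fixed point theorem on $\Gamma$ to produce $\phi\in\Gamma$ with $F(\phi)=\phi$. Because every fixed point of $F$ solves \eqref{2}, and because membership in $\Gamma$ forces monotonicity, $\phi$ is automatically a monotone nondecreasing solution of the traveling wave equation.

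Finally, I would verify the asymptotics \eqref{3}. The left limit is immediate from the squeeze $0\le\phi(\xi)\le\overline{\phi}(\xi)\le e^{\lambda_1(c)\xi}\to 0$ as $\xi\to-\infty$. For the right limit, monotonicity and boundedness yield $\phi^{*}:=\lim_{\xi\to+\infty}\phi(\xi)\in(0,K]$, where strict positivity follows from $\phi\ge\underline{\phi}$ together with the fact that $\underline{\phi}(\xi_{0})>0$ for a suitable $\xi_{0}$. Passing to the limit $\xi\to+\infty$ in the fixed point identity $\phi=F(\phi)$ via dominated convergence, and using $\gamma_{1}(c)\gamma_{2}(c)=-\beta$, one obtains
\[
\phi^{*}=\frac{(\beta-d)\phi^{*}+b(\phi^{*})}{\beta},
\]
which simplifies to $b(\phi^{*})=d\phi^{*}$. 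Hypothesis \textbf{(B)} then forces $\phi^{*}=K$.

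The step I expect to be the main obstacle is this final limit passage: the Schauder argument is essentially a clean application once the groundwork in Lemmas \ref{le3.7}--\ref{le3.8} is in place, but justifying the interchange of limit and integral in $F(\phi)(\xi)$ as $\xi\to+\infty$, and ensuring that the lower solution really does guarantee $\phi^{*}>0$ (so that \textbf{(B)} rules out any intermediate limit and selects $\phi^{*}=K$), is where the quantitative content of the construction in Section 2.2 must be used in earnest.
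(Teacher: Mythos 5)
Your proposal is correct and follows essentially the same route as the paper: Schauder's fixed point theorem on $\Gamma$, the exponential upper solution forcing $\lim_{\xi\to-\infty}\phi(\xi)=0$, positivity of the right limit from the lower solution, and identification of the limit as $K$ via $b(\phi^*)=d\phi^*$ and hypothesis \textbf{(B)}. The only difference is that you spell out the final limit passage (dominated convergence in the fixed point identity, using $\gamma_1(c)\gamma_2(c)=-\beta$), which the paper leaves as "easy to verify" here but carries out explicitly in the proof of its $c=c^*$ theorem.
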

\begin{proof}
Applying Schauder's fixed point theorem, the existence of fixed point of $F$ in $\Gamma$ is clear. Denote the fixed point by $\phi(\xi),$ then the monotonicity implies the existence of $\lim_{\xi\rightarrow -\infty }\phi (\xi), \lim_{\xi\rightarrow +\infty }\phi (\xi).$ Moreover, $\lim_{\xi\rightarrow -\infty }\phi (\xi)=0$ is obtained by the asymptotic behavior of upper and lower solutions. At the same time, the definition of $\underline{\phi}(\xi)$ leads to
\[
0< \lim_{\xi\rightarrow +\infty }\phi (\xi) \le K.
\]
Then it is easy to verify that $\lim_{\xi\rightarrow +\infty }\phi (\xi) = K.$ The proof is complete.
\end{proof}

\subsection{Existence of Monotone Traveling Wavefronts: $c=c^*$}

In this part, we shall establish the existence of monotone solutions of \eqref{2}-\eqref{3} if $c=c^*.$ We first present the main results as follows.
\begin{theorem}\label{th3}
If $c=c^*,$ then \eqref{2} with \eqref{3} has a monotone solution.
\end{theorem}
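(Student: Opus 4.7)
The plan is a limiting argument in the wave speed. By Theorem \ref{th2}, for every $c>c^{\ast}$ there is a monotone solution $\phi_{c}$ of \eqref{2}--\eqref{3}. Pick a decreasing sequence $c_{n}\downarrow c^{\ast}$ and, using the translation invariance of \eqref{2}, normalize the corresponding waves $\phi_{n}:=\phi_{c_{n}}$ by a shift so that $\phi_{n}(0)=K/2$. The goal is to extract a convergent subsequence whose limit is a monotone solution at $c=c^{\ast}$.

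First I would fix uniform bounds. Since $L_{1},L_{2}$ in Lemma \ref{le3.2} are independent of $c$, Remark \ref{re1} lets me choose a single $\beta>d$ so that Lemmas \ref{le2.1}--\ref{le3.3} hold simultaneously for all $c_{n}$ in a small right neighborhood of $c^{\ast}$. Because each $\phi_{n}$ is a fixed point of the operator $F$ with $c=c_{n}$, and $\phi_{n}\in\Gamma$, Lemma \ref{le3.6}(3) gives a uniform Lipschitz bound on $\phi_{n}$: $|\phi_{n}'(\xi)|\le \beta K/[c_{n}(1+b'(0)KT)]$, which is uniformly bounded as $c_{n}\to c^{\ast}>0$. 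Differentiating \eqref{2} once more, or using the integral representation \eqref{3.5}, the same bound propagates to $\phi_{n}''$. By Arzel\`a--Ascoli and a standard diagonal argument, a subsequence (still denoted $\phi_{n}$) converges in $C^{1}_{\mathrm{loc}}(\mathbb{R})$ to some $\phi\in C^{1}(\mathbb{R},[0,K])$, with $\phi_{n}''\to\phi''$ in $C_{\mathrm{loc}}$ as well (via the integral form). The limit $\phi$ is monotone nondecreasing, satisfies $\phi(0)=K/2$, and the uniform Lipschitz bound survives with $c^{\ast}$ in place of $c_{n}$.

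Next I pass to the limit in \eqref{2}. The continuity of $b$ and of $\tau$ (assumption (A1)), together with local uniform convergence of $\phi_{n}$, makes the nonlinear, state-dependent delay term $b(\phi_{n}(\xi-c_{n}\tau(\phi_{n}(\xi))))$ converge pointwise (and in $C_{\mathrm{loc}}$) to $b(\phi(\xi-c^{\ast}\tau(\phi(\xi))))$; here the key observation is that the argument $\xi-c_{n}\tau(\phi_{n}(\xi))$ converges uniformly on compacta to $\xi-c^{\ast}\tau(\phi(\xi))$, so continuity of $\phi$ suffices. Thus $\phi$ solves \eqref{2} with $c=c^{\ast}$.

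It remains to verify the boundary conditions \eqref{3}. Since $\phi$ is monotone and bounded in $[0,K]$, the limits $\phi(\pm\infty)$ exist; standard ODE arguments applied to \eqref{2} (or integration against a test function plus a limiting translation) force each limit to be an equilibrium of the kinetic equation $-du+b(u)=0$, i.e.\ either $0$ or $K$ by assumption (B). Monotonicity together with $\phi(0)=K/2\in(0,K)$ rules out the constant waves and forces $\phi(-\infty)=0$ and $\phi(+\infty)=K$. The main obstacle I anticipate is confirming that the limits at $\pm\infty$ are indeed the equilibria $0$ and $K$: because passing to the limit $c_{n}\to c^{\ast}$ can in principle let the lower solution from \eqref{5} degenerate (the coefficient $q$ blows up as $c\to c^{\ast}$ when $\lambda_{1}(c)\to\lambda_{2}(c)$), one cannot simply transfer the lower-solution estimate through the limit. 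The trick that handles this is precisely the normalization $\phi_{n}(0)=K/2$, which keeps the waves from drifting to the trivial equilibrium; combined with monotonicity and the equilibrium dichotomy it pins down both asymptotics, completing the proof.
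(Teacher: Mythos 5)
Your proposal is correct and follows essentially the same route as the paper: take supercritical waves $\phi_{c_n}$ with $c_n\downarrow c^{\ast}$, normalize $\phi_n(0)=K/2$, use the uniform Lipschitz bound and Arzel\`a--Ascoli to extract a locally uniform limit, and identify the limits at $\pm\infty$ as roots of $du=b(u)$, which the normalization pins to $0$ and $K$. The only cosmetic difference is that the paper passes to the limit in the integral operator $F$ (via dominated convergence) rather than in the differential equation \eqref{2} itself, but this is the same argument in substance.
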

\begin{proof}
Let $\{c_n\}_{n\in \mathbb{N}}$ be a decreasing sequence satisfying
\[
c_n \to c^*, n\to \infty.
\]
Then Theorem \ref{th2} implies that for each $c_n,$ $F$ with $c=c_n$ has a fixed point $\phi_n(\xi),$ where $\phi_n(\xi)$ is monotone increasing.  Because a traveling wavefront is invariant in the sense of phase shift, we assume that
\begin{equation}\label{zh}
\phi_n(0)=K/2, n\in \mathbb{N}.
\end{equation}

From Lemma \ref{le2.2}, $\phi_n(\xi)$ is equicontinuous and uniformly bounded for all $n\in\mathbb{N},\xi\in\mathbb{R}.$ By Ascoli-Arzela Lemma and a standard nested subsequence argument, it follows that
there exists a subsequence of $\{c_n\}_{n\in \mathbb{N}},$ still denoted it by $\{c_n\}_{n\in \mathbb{N}},$ such that the corresponding $\phi_n(\xi)$  converges uniformly on every bounded interval of $\xi\in\mathbb{R},$ and hence pointwise for $\xi\in\mathbb{R}$ to a function $\phi^*(\xi).$ Moreover, it is evident that
\[
\min\{e^{\gamma _1 (c_n) (\xi-s)}, e^{\gamma _1 (c_n) (\xi-s)}\} \to \min\{e^{\gamma _1 (c^*) (\xi-s)}, e^{\gamma _1 (c^*) (\xi-s)}\}, n\to \infty
\]
and the convergence is uniform in $\xi, s\in \mathbb{R}.$

Let $n\to\infty$ in $F,$ then the dominated convergence theorem implies that
\[
F(\phi ^{\ast })(\xi )=\frac{1}{\gamma _{2}(c^{\ast })-\gamma _{1}(c^{\ast })%
}\left[ \int_{-\infty }^{\xi }e^{\gamma _{1}(c^{\ast })(\xi -s)}+\int_{\xi }^{\infty }e^{\gamma _{2}(c^{\ast })(\xi -s)}\right]H(\phi
^{\ast })(s)ds
\]
by the uniform continuity of $b(u), \tau (u).$ Therefore, $\phi ^{\ast }(\xi)$ is a fixed point of $F$ with $c=c^*,$ and a bounded solution of \eqref{2} with $c=c^*.$ In particular, Lemma \ref{le2.2} tells us that $\phi ^{\ast }(\xi)$ is uniformly continuous.

By \eqref{zh}, we see that $\phi ^{\ast }(0)=K/2.$ Moreover, the monotonicity and boundedness of $\phi_n(\xi)$ indicate that
$\phi ^{\ast }(\xi)$ is monotone increasing and bounded for $\xi\in \mathbb{R},$ which further implies that $\lim_{\xi\to\pm\infty}\phi ^{\ast }(\xi)$ exist. Denote
$$\lim_{\xi\to\pm\infty}\phi ^{\ast }(\xi)=\phi _{\pm}.$$
Then the uniform continuity and the dominated convergence theorem in $F$ lead to
\[
\phi _{\pm}=\frac{\beta \phi _{\pm} -d\phi _{\pm} +b(\phi _{\pm})}{\beta},
\]
and so $d\phi _{\pm} =b(\phi _{\pm}).$ Again by \eqref{zh}, we have
\[
0\le \phi_- \le K/2 \le \phi_+ \le K,
\]
which indicates that
\[
\phi_-=0, \phi_+=K
\]
by (B). The proof is complete.
\end{proof}

\subsection{Nonexistence of Monotone Traveling Wavefronts: $c<c^*$}

In this section, we investigate the nonexistence of traveling wavefronts if $c<c^*.$ We first consider the following initial value problem
\begin{equation}\label{4.1}
\begin{cases}
u_{t}(x,t)= u_{xx}(x,t)-D_1 u(x,t)+D_2 u(x,t-m)-D_3 u^2 (x,t),\\
u(x,s)=\psi (x,s),
\end{cases}
\end{equation}
in which  $x\in \mathbb{R}, t>0,s\in [-m, 0],$ $D_1, D_2, D_3$ are positive constants satisfying $D_2 >D_1$, $\psi (x,s)$ is uniformly continuous in $x\in \mathbb{R}, s\in [-m, 0].$ By Smith and Zhao \cite{sz}, Thieme and Zhao \cite{thieme}, we have the following three lemmas.

\begin{lemma}\label{le4.1}
If $\psi (x,s)$ satisfies
\[
0\le \psi (x,s) \le \frac{D_2- D_1}{ D_3}, x\in \mathbb{R}, s\in [-m, 0],
\]
then \eqref{4.1} has a unique mild solution $u(x,t)$ defined for all $x\in\mathbb{R},t>0.$ Moreover, $u(x,t)$ is continuous in
$x\in\mathbb{R},t>0,$ and satisfies
\[
0\le u(x,t)\le \frac{D_2- D_1}{ D_3}, x\in\mathbb{R},t>0.
\]
\end{lemma}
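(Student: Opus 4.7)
The plan is to solve \eqref{4.1} by the method of steps. On each interval $[km,(k+1)m]$ with $k=0,1,2,\ldots$, the delayed term $u(x,t-m)$ is already determined, either by the initial datum $\psi$ (when $k=0$) or by the solution constructed in the previous step. Hence on each such interval the problem reduces to a non-delayed semilinear parabolic Cauchy problem
\[
u_{t}=u_{xx}-D_{1}u-D_{3}u^{2}+g(x,t),\quad x\in\mathbb{R},\ t\in[km,(k+1)m],
\]
with $g(x,t)=D_{2}u(x,t-m)$ a known, uniformly bounded and uniformly continuous forcing term, together with the initial condition $u(\cdot,km)$ inherited from the previous step.

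On each step I would cast the equation in mild form using the heat semigroup $S(t)$ on the space $BUC(\mathbb{R})$ of bounded uniformly continuous functions,
\[
u(x,t)=\bigl(S(t-km)u(\cdot,km)\bigr)(x)+\int_{km}^{t}\bigl(S(t-s)\bigl[-D_{1}u(\cdot,s)-D_{3}u^{2}(\cdot,s)+g(\cdot,s)\bigr]\bigr)(x)\,ds,
\]
and apply Banach's fixed point theorem. Since $-D_{3}u^{2}$ is locally Lipschitz on bounded subsets of $BUC(\mathbb{R})$, a standard Picard iteration yields a unique local mild solution on each step; continuity of $u$ in $(x,t)$ then follows from the smoothing of $S(t)$ combined with the uniform continuity of $g$ and $\psi$. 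To extend the local solutions to all $t>0$ and simultaneously establish the bound $0\le u\le K^{\ast}:=(D_{2}-D_{1})/D_{3}$, I would observe that both constants $0$ and $K^{\ast}$ are steady states of \eqref{4.1}: at $u\equiv K^{\ast}$,
\[
-D_{1}K^{\ast}+D_{2}K^{\ast}-D_{3}(K^{\ast})^{2}=K^{\ast}\bigl(D_{2}-D_{1}-D_{3}K^{\ast}\bigr)=0.
\]
The delayed reaction $D_{2}u(x,t-m)$ has a positive coefficient, so \eqref{4.1} is of cooperative (quasimonotone) type, and the comparison principle for delayed reaction-diffusion equations established in Smith and Zhao \cite{sz} and Thieme and Zhao \cite{thieme} yields $0\le u(x,t)\le K^{\ast}$ throughout each interval, ruling out blow-up and permitting extension to the next step. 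Iterating over $k$ produces the unique global mild solution with the claimed bounds and regularity.

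The main obstacle lies in the comparison step rather than in the local existence theory: because of the lag one cannot appeal to the classical parabolic maximum principle alone, and it has to be verified that $v\equiv K^{\ast}$ is an admissible supersolution in the delayed sense and that the resulting quasimonotone comparison on the unbounded domain $\mathbb{R}$ does not require additional growth hypotheses. Both of these points are precisely what the abstract frameworks of \cite{sz} and \cite{thieme} are designed to handle, which is why I plan to invoke those references rather than reconstruct the argument from scratch.
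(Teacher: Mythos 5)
Your proposal is correct; in fact the paper offers no proof of this lemma at all, simply stating it (together with Lemmas \ref{le4.2} and \ref{le4.3}) as a consequence of the results in Smith and Zhao \cite{sz} and Thieme and Zhao \cite{thieme}. Your method-of-steps argument with the mild formulation in $BUC(\mathbb{R})$, the observation that $0$ and $(D_2-D_1)/D_3$ bound an invariant order interval because the reaction is quasimonotone in the delayed variable ($D_2>0$), and the appeal to the quasimonotone comparison framework of \cite{sz,thieme} is exactly the standard machinery behind the cited results, so it is consistent with (and more detailed than) what the paper does.
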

\begin{lemma}\label{le4.2}
Assume that
\[
0\le w(x,t) \le \frac{D_2- D_1}{ D_3}, x\in\mathbb{R},t\ge -m,
\]
and satisfies
\begin{equation}\label{4.2}
\begin{cases}
w_{t}(x,t)\ge (\le) w_{xx}(x,t)-D_1 w(x,t)+D_2 w(x,t-m)-D_3 w^2 (x,t),\\
w(x,s)\ge (\le) \psi (x,s)
\end{cases}
\end{equation}
for  $x\in \mathbb{R}, t>0,s\in [-m, 0].$ Then
$
w(x,t)\ge (\le) u(x,t), x\in\mathbb{R},t>0.
$
\end{lemma}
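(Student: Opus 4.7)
The plan is to invoke the standard comparison theory for quasi-monotone delayed reaction--diffusion equations developed in \cite{sz,thieme}. Write the nonlinearity as $f(w,v) = -D_1 w + D_2 v - D_3 w^2$. On the invariant region $[0, (D_2-D_1)/D_3]$ provided by Lemma \ref{le4.1} we have $\partial_v f \equiv D_2 > 0$, so $f$ is nondecreasing in the delayed argument. The first step is to pick a shift constant $\alpha \geq 2D_2 - D_1$ so that $\alpha w + f(w,v)$ is also nondecreasing in $w$ on the invariant interval, since $\partial_w [\alpha w + f(w,v)] = \alpha - D_1 - 2D_3 w \geq 0$ whenever $0 \leq w \leq (D_2-D_1)/D_3$. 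After this shift the reaction term is monotone in both arguments.

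Next, recast both $u$ and $w$ as mild solutions via the heat semigroup $\{S(t)\}_{t \geq 0}$ generated by $\partial_{xx} - \alpha$, whose convolution kernel is a positive multiple of the Gaussian and is therefore order-preserving on the space of bounded continuous functions. For $t > 0$,
\[
u(x,t) = [S(t)\psi(\cdot,0)](x) + \int_0^t [S(t-s)(\alpha u(\cdot,s) + f(u(\cdot,s), u(\cdot,s-m)))](x)\, ds,
\]
and the analogous integral inequality (with $\geq$ or $\leq$) holds for $w$ in view of the assumption \eqref{4.2}, after absorbing $\alpha w$ into both sides.

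The comparison is then established by induction on the time slabs $[km, (k+1)m]$, $k = 0, 1, 2, \dots$. On $[0,m]$ the delayed arguments $u(x, s-m)$ and $w(x, s-m)$ are determined by the prescribed initial data and satisfy $w(x,s-m) \geq \psi(x,s-m) = u(x,s-m)$ (respectively with $\leq$), so subtracting the two integral relations and using the order-preservation of $S(t-s)$ together with the monotonicity in $w$ yields the comparison on $[0,m]$ via a standard Gronwall or fixed-point iteration. Once $w \geq u$ (respectively $w \leq u$) is known on $[-m,m]$, the monotonicity of $f$ in its second argument propagates the inequality to $[m,2m]$, and induction extends it to all $t > 0$.

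The main obstacle is that the quadratic term $-D_3 w^2$ is neither globally Lipschitz nor monotone in $w$; this is precisely why the a priori bound $0 \leq w, u \leq (D_2-D_1)/D_3$ coming from Lemma \ref{le4.1} and from the hypothesis on $w$ is indispensable. On this invariant interval the $\alpha$-shifted reaction becomes monotone with Lipschitz constant depending only on $(D_2-D_1)/D_3$, so the iteration closes uniformly on each finite time slab. Since these arguments are carried out in full generality in \cite{sz,thieme}, a brief justification with citation suffices.
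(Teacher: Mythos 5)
Your proposal is correct and follows essentially the same route as the paper, which offers no proof of this lemma at all but simply attributes it to Smith--Zhao \cite{sz} and Thieme--Zhao \cite{thieme}. Your sketch (quasimonotonicity in the delayed argument, the shift $\alpha\ge 2D_2-D_1$ to restore monotonicity in $w$ on the invariant interval, the mild formulation with the order-preserving semigroup, and the method of steps with Gronwall) is precisely the standard argument underlying those cited comparison results, so citing them with this justification matches the paper's treatment.
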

\begin{lemma}\label{le4.3}
Assume that $c>0$ such that
\[
\lambda^2 -c \lambda -D_1 +D_2e^{-\lambda c m}>0, \lambda \ge 0.
\]
If $\psi (x,s)$ satisfies
\[
0\le \psi (x,s) \le \frac{D_2- D_1}{ D_3}, x\in \mathbb{R}, s\in [-m, 0],\psi (0,0)>0,
\]
then
\[
\liminf_{t\to\infty}\inf_{|x|<ct} u(x,t)=\limsup_{t\to\infty}\sup_{|x|<ct} u(x,t)=\frac{D_2- D_1}{ D_3}.
\]
\end{lemma}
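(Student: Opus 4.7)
The plan is to establish the two equalities in Lemma \ref{le4.3} via a squeeze argument, dispatching the upper bound essentially for free and treating the lower bound as a spreading-speed statement for a monotone delayed reaction--diffusion equation.

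For the upper bound, Lemma \ref{le4.1} gives the pointwise estimate $u(x,t) \le (D_2-D_1)/D_3$ for all $x\in\mathbb{R}$ and $t>0$, so in particular
\[
\limsup_{t\to\infty}\sup_{|x|<ct} u(x,t) \le \frac{D_2-D_1}{D_3}.
\]
Thus the real content is to prove the matching lower bound on the $\liminf$.

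For the lower bound I would first reduce to a convenient compactly supported initial datum. Since $\psi$ is uniformly continuous and $\psi(0,0)>0$, choose a nonnegative continuous function $\tilde\psi(x,s)$ with compact support such that $0\le\tilde\psi(x,s)\le\psi(x,s)$ and $\tilde\psi(0,0)>0$. Let $\tilde u(x,t)$ denote the mild solution of \eqref{4.1} issuing from $\tilde\psi$; Lemma \ref{le4.2} yields the comparison $u(x,t)\ge \tilde u(x,t)$, so it suffices to prove the lower bound for $\tilde u$. The equation in \eqref{4.1} is a monostable delayed reaction--diffusion equation with unstable equilibrium $0$ and stable equilibrium $K_\ast := (D_2-D_1)/D_3$, and its linearization at $0$ is $u_t=u_{xx}-D_1u+D_2u(x,t-m)$. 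Applying the asymptotic-spreading theory for monotone semiflows developed in \cite{sz,thieme} to this KPP-type system yields a spreading speed $c^\ast$ that coincides with the linear spreading speed, which in turn equals the smallest $c$ for which the characteristic equation $\Lambda(\lambda,c) := \lambda^2-c\lambda-D_1+D_2e^{-\lambda c m}=0$ admits a positive real root. Since the hypothesis $\Lambda(\lambda,c)>0$ for all $\lambda\ge 0$ is exactly the statement $c<c^\ast$, the spreading theorem gives
\[
\liminf_{t\to\infty}\inf_{|x|\le ct}\tilde u(x,t) = K_\ast,
\]
and combining this with $u\ge \tilde u$ and the pointwise upper bound from Lemma \ref{le4.1} closes the squeeze.

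The main obstacle I anticipate is verifying that the abstract hypotheses of the spreading-speed theorems in \cite{sz,thieme} apply cleanly to \eqref{4.1}: one has to check that the induced semiflow is order-preserving on the interval $[0,K_\ast]$, that the delayed nonlinearity $u\mapsto D_2u(\cdot,t-m)-D_3u^2$ satisfies the subhomogeneity/KPP condition on this interval so that the nonlinear spreading speed equals the linear one, and that the linear spreading speed is indeed characterized by $\Lambda(\lambda,c)=0$. The inequality $D_2>D_1$ ensures both that $K_\ast>0$ and that the zero state is linearly unstable, which makes all these checks routine under the setup of \cite{sz,thieme}; once they are in place, the identification of $c^\ast$ via the characteristic function $\Lambda$ proceeds by the standard Laplace-transform or exponential-ansatz argument for linear delayed parabolic equations.
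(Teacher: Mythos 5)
Your proposal is correct and follows essentially the same route as the paper: the paper gives no independent proof of Lemma \ref{le4.3}, deriving it (together with Lemmas \ref{le4.1} and \ref{le4.2}) directly from the comparison and asymptotic-spreading theory of Smith--Zhao \cite{sz} and Thieme--Zhao \cite{thieme}, which is exactly the machinery you invoke, with the upper bound from Lemma \ref{le4.1} and the identification of the hypothesis $\lambda^2-c\lambda-D_1+D_2e^{-\lambda cm}>0$ for all $\lambda\ge 0$ as $c$ being below the (linear, KPP-type) spreading speed.
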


\begin{theorem}\label{th2no}
If $c<c^*,$ then \eqref{2} with \eqref{3} does not have a monotone solution.
\end{theorem}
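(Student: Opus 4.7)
The strategy is a spreading-versus-translation contradiction: assume for contradiction that \eqref{2}--\eqref{3} admits a monotone solution $\phi$ with some $c \in (0, c^*)$, and set $u(x, t) := \phi(x + ct)$. The first step converts the wave equation into a logistic-type differential inequality amenable to Lemmas~\ref{le4.1}--\ref{le4.3}. From (B4) we have $b(v) \ge b'(0) v - L v^2$ for $v \in [0, K]$; from (A3) and the monotonicity of $\phi$ and $\tau$,
\[
\phi(\xi - cM) \le \phi(\xi - c\tau(\phi(\xi))) \le \phi(\xi - cm), \qquad \phi^2(\xi - c\tau(\phi(\xi))) \le \phi^2(\xi),
\]
so substituting into \eqref{2} yields $\phi''(\xi) - c\phi'(\xi) - d\phi(\xi) + b'(0)\phi(\xi - cM) - L\phi^2(\xi) \le 0$, equivalently, $u$ is a global super-solution of \eqref{4.1} with $D_1 = d$, $D_2 = b'(0)$ (so $D_2 > D_1$ by (B2)), $D_3 = L$, and delay $M$.

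Next I would pick $x_0 \in \mathbb{R}$, $R > 0$, and a constant $\kappa \in (0, (b'(0)-d)/L]$ with $\kappa \le \inf_{|x - x_0| \le R,\, s \in [-M, 0]} u(x, s)$; such $\kappa > 0$ exists since $\phi > 0$ and $u$ is continuous. Let $v(x, t)$ be the solution of \eqref{4.1} with initial data $\kappa \mathbf{1}_{[x_0 - R, x_0 + R]}(x)$, supplied by Lemma~\ref{le4.1}; applying Lemma~\ref{le4.2} (after truncating $u$ at the level $(b'(0)-d)/L$ in the left tail, where $u$ is small, to meet the boundedness hypothesis) gives $v(x, t) \le u(x, t)$ on the relevant domain. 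Lemma~\ref{le4.3} then yields that, for every $c'$ below the spreading speed $c^{**}$ of \eqref{4.1} with delay $M$, $v(x, t) \to (b'(0)-d)/L$ uniformly on $|x - x_0| \le c' t$. Taking $c' > c$ and $x(t) = x_0 - c' t$, we obtain $v(x(t), t) \to (b'(0)-d)/L > 0$ while $u(x(t), t) = \phi(x_0 + (c - c') t) \to 0$ by \eqref{3}, contradicting $v \le u$.

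The principal obstacle is that the super-solution derivation above uses delay $M$, so $c^{**}$ is in general strictly smaller than $c^*$ (which reflects the delay $m = \tau(0)$ appearing in the characteristic function $\Lambda$). To close the gap for all $c < c^*$, I would refine the first step using the continuity $\tau(0) = m$: for every $\varepsilon > 0$ there exists $\delta = \delta(\varepsilon)$ with $\tau(u) \le m + \varepsilon$ for $u \in [0, \delta]$; choosing $\xi_1$ with $\phi(\xi) \le \delta$ on $(-\infty, \xi_1]$, the bound $\tau(\phi) \le m + \varepsilon$ is available on the moving half-plane $\{x + ct \le \xi_1\}$, and $M$ can be replaced by $m + \varepsilon$ there. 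The associated spreading speed depends continuously on the delay and tends to $c^*$ as $\varepsilon \to 0$, so for small $\varepsilon$ it exceeds $c$. Executing the comparison on this moving half-plane---aligning the support of the initial bump of $v$ with the half-plane throughout its spreading cone---is the most delicate part and is the main technical obstacle.
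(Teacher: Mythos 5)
Your overall strategy (compare the putative wave, viewed as a super-solution of a logistic-type delayed equation, against the spreading speed from Lemmas~\ref{le4.1}--\ref{le4.3}, and get a contradiction by evaluating along a ray moving left faster than $c$) is exactly the paper's strategy, and your first step with delay $M$ is correct as far as it goes. But it only rules out speeds below the spreading speed of the auxiliary equation with delay $M$, and you have correctly identified that closing the gap up to $c^*$ (which is tied to the delay $m=\tau(0)$) is the real content of the theorem --- and that step is not carried out in your proposal. Your suggested fix, replacing $M$ by $m+\varepsilon$ only on the moving half-plane $\{x+ct\le \xi_1\}$ where $\phi\le\delta$, does not work as a comparison argument: parabolic equations have infinite propagation speed, so the solution $v$ of \eqref{4.1} is positive everywhere for $t>0$ and, more importantly, Lemma~\ref{le4.2} is a whole-line comparison; to compare on a non-cylindrical subdomain you would need $v\le u$ on its moving boundary for all time. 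The boundary $x=\xi_1-ct$ recedes at speed $c$, strictly slower than the spreading speed of $v$, so the spreading cone eventually overtakes it; there $v$ approaches $(D_2-D_1)/D_3$ while $u$ equals the fixed small value $\phi(\xi_1)$, and the boundary comparison fails. So the "main technical obstacle" you flag is a genuine gap, not a routine verification.

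The paper closes this gap with a global inequality rather than a domain restriction: it proves \eqref{4.4}, namely
\begin{equation*}
b(\phi(\xi-c_1\tau(\phi(\xi))))\ \ge\ -\epsilon\,\phi(\xi)+b'(0)\,\phi(\xi-c_1 m)-N\,\phi^{2}(\xi),\qquad \xi\in\mathbb{R},
\end{equation*}
valid on all of $\mathbb{R}$. The delay-mismatch term $b'(0)\bigl[\phi(\xi-c_1\tau(\phi(\xi)))-\phi(\xi-c_1 m)\bigr]$ is handled in two regimes: near $-\infty$, where $\phi'(\xi)\to 0$ and $\tau(\phi(\xi))\to m$ (here \eqref{4.3} and (A1)--(A2) give $|\tau(\phi(\xi))-m|\le T\phi(\xi)$), it is bounded below by $-\epsilon\,\phi(\xi)$; on the complementary region $\xi\ge \xi_1+c_1M$, where $\phi(\xi)\ge\phi(\xi_1)>0$, it is absorbed into the quadratic term $-N\phi^2(\xi)$ with $N$ depending on $\phi(\xi_1)$. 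This makes $u(x,t)=\phi(x+c_1t)$ a super-solution of \eqref{4.1} with $D_1=d+\epsilon$, $D_2=b'(0)$, $D_3=N$ and fixed delay $m$ on the whole line, where $\epsilon$ is chosen so small that the characteristic inequality of Lemma~\ref{le4.3} still holds for observation speeds up to $(2c^*+c_1)/3>(c_1+c^*)/2$; evaluating at $x=-(c_1+c^*)t/2$ then gives the contradiction with $\phi(-\infty)=0$. If you want to salvage your write-up, this absorption of the state-dependence into $-\epsilon\phi-N\phi^2$ (rather than a moving-domain comparison) is the missing idea you need.
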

\begin{proof}
Were the statement false, then  \eqref{2} with \eqref{3} has a monotone solution $\phi(\xi)$ for some $c_1 <c^*$. Evidently, $\phi(\xi)$ satisfies
\begin{equation}\label{4.3}
\lim_{\xi\to -\infty}\phi(\xi)=\lim_{\xi\to -\infty}\phi'(\xi)=0.
\end{equation}
In particular, we can select $\epsilon >0$ such that
\[
b'(0) > d+2\epsilon
\]
and
\[
\lambda^2 -c \lambda -(d+ \epsilon ) +b'(0)e^{-\lambda c m}>0
\]
for all $\lambda >0, 3c \le 2c^* +c_1.$ We first prove that there exists $N >0$ such that
\begin{equation}
b(\phi (\xi -c_{1}\tau (\phi (\xi ))))\geq -\epsilon \phi (\xi )+b^{\prime}(0)\phi(\xi - c_1 m) -N \phi ^2 (\xi ),\xi \in \mathbb{R}.  \label{4.4}
\end{equation}%
In fact, (B1)-(B4) and the monotonicity of $\phi (\xi )$ imply that
\begin{eqnarray*}
b(\phi (\xi -c_{1}\tau (\phi (\xi )))) &\geq &b^{\prime }(0)\phi (\xi
-c_{1}\tau (\phi (\xi )))-L\phi ^{2}(\xi -c_{1}\tau (\phi (\xi ))) \\
&\geq &b^{\prime }(0)\phi (\xi -c_{1}\tau (\phi (\xi )))-L\phi ^{2}(\xi ) \\
&=&b^{\prime }(0)\phi (\xi -c_{1}m)-L\phi ^{2}(\xi ) \\
&&+b^{\prime }(0)\left[ \phi (\xi -c_{1}\tau (\phi (\xi )))-\phi (\xi
-c_{1}\tau (0))\right] .
\end{eqnarray*}%
Due to \eqref{4.3}, there exists $\xi _{1}\in \mathbb{R}$ such that
\[
b^{\prime }(0)\left[ \phi (\xi -c_{1}\tau (\phi (\xi )))-\phi (\xi
-c_{1}\tau (0))\right] \geq -\epsilon \phi (\xi )
\]%
for all $\xi \leq \xi _{1}+c_{1}M.$ Therefore, we have
\[
b(\phi (\xi -c_{1}\tau (\phi (\xi )))\geq -\epsilon \phi (\xi )+b^{\prime
}(0)\phi (\xi -c_{1}m)-L\phi ^{2}(\xi )
\]%
for all $\xi \leq \xi _{1}+c_{1}M.$ If $\xi \geq \xi _{1}+c_{1}M,$ then
\begin{eqnarray*}
&&b^{\prime }(0)\left[ \phi (\xi -c_{1}\tau (\phi (\xi )))-\phi (\xi
-c_{1}\tau (0))\right]  \\
&\geq &-b^{\prime }(0)\phi (\xi -c_{1}\tau (0)) \\
&\geq &-b^{\prime }(0)\phi (\xi _{1}) \\
&=&\frac{-b^{\prime }(0)}{\phi (\xi _{1})}\phi ^{2}(\xi _{1}) \\
&\geq &\frac{-b^{\prime }(0)}{\phi (\xi _{1})}\phi ^{2}(\xi ),
\end{eqnarray*}%
which indicates \eqref{4.4}.

From \eqref{4.4}, $u(x,t)=\phi (x+c_1 t)$ satisfies
\begin{equation}\label{4.5}
\begin{cases}
u_{t}(x,t)\ge  u_{xx}(x,t)-(d+\epsilon ) u(x,t)+b'(0)u(x,t-m)- N u^2(x,t),\\
u(x,s)=\phi (x+ c_1s),
\end{cases}
\end{equation}
for $x\in \mathbb{R}, t>0,s\in [-m, 0].$
By Lemmas \ref{le4.1}-\ref{le4.3}, we see that
\[
\liminf_{t\to\infty} u(-(c_1+c^*)t/2,t) \ge \frac{b'(0)-(d+\epsilon )}{N} >0,
\]
which implies a contradiction because of
\[
\xi =-(c_1+c^*)t/2+c_1t \to -\infty, t\to \infty.
\]
The proof is complete.
\end{proof}

\section{Nonmonotone Birth Function}

In this section, we investigate the existence and nonexistence of positive traveling wave solutions of \eqref{1}, namely, existence and nonexistence of positive solutions of \eqref{2} and \eqref{4}, if
$b: [0,\infty)\to [0,\infty)$ satisfies the following assumptions:
\begin{description}
  \item[(C1)] $b(u)$ is $C^1$ for $u\in [0,\infty );$
  \item[(C2)] $b'(0)>d$ and $b(u)<b'(0)u, u\in [0,\infty );$
  \item[(C3)] $0\le b'(0)u-b(u)< Lu^2, |b'(u)|\le b'(0)$ for $u\in [0,K]$ and some $L>0.$
\end{description}
Clearly, if $b(u)=pue^{-u},$ then (C1)-(C3) hold when $p> d.$ In particular, (C1)-(C3) will be imposed throughout this section without further illustration. For convenience, we define
\[
\mathcal{K}=\max_{u\in [0, K]}b(u), \mathcal{T}=\sup_{u\in [0,\mathcal{K}]}\tau '(u).
\]

We first present the nonexistence of traveling wave solutions.
\begin{theorem}\label{th5.2}
If $c<c^*,$ then \eqref{2} does not have a positive solution $\phi(\xi)$ satisfying \eqref{4} and
$
0< \phi(\xi) \le \mathcal{K}.
$
\end{theorem}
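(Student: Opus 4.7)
The plan is to imitate the contradiction strategy of Theorem~\ref{th2no}: assume a profile $\phi$ of speed $c_1<c^*$ with $0<\phi\le\mathcal{K}$ exists, promote $u(x,t):=\phi(x+c_1 t)$ to a super-solution of a constant-delay reaction-diffusion equation of the form \eqref{4.1}, and then invoke the asymptotic spreading Lemma~\ref{le4.3} to show $u$ spreads faster than $c_1$, yielding a positive liminf along rays $x(t)=-\tilde c\,t$ on which $\xi=x(t)+c_1 t\to -\infty$, contradicting $\phi\to 0$ at $-\infty$. The non-trivial ingredient is the reduction from state-dependent to constant delay, which in the non-monotone setting must be carried out without the estimate $\phi(\xi-c_1\tau(\phi(\xi)))\le\phi(\xi)$ that was available in Theorem~\ref{th2no}.

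I first record qualitative properties of $\phi$. Since $\phi$ is bounded on $\mathbb{R}$, so is $\phi''$ via \eqref{2}; hence $\phi'$ is bounded and uniformly continuous, and a standard argument using $\phi(\xi)\to 0$ as $\xi\to -\infty$ upgrades this to $\phi'(\xi)\to 0$ at $-\infty$. Since $\phi$ is continuous and positive with $\liminf_{\xi\to +\infty}\phi(\xi)>0$, on every half-line $[\xi_0,\infty)$ the profile has a strictly positive infimum. I next fix $c_*\in(c_1,c^*)$, and by Lemma~\ref{le3.1} together with continuity in $c$, pick $\epsilon>0$ so small that $b'(0)>d+2\epsilon$ and
\[
\lambda^2-c\lambda-(d+\epsilon)+b'(0)e^{-\lambda c m}>0 \qquad\text{for all }\lambda\ge 0,\ c\in[c_1,c_*].
\]

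The key technical step is the pointwise inequality
\[
b(\phi(\xi-c_1\tau(\phi(\xi))))\ge -\epsilon\phi(\xi)+b'(0)\phi(\xi-c_1 m)-N\phi^2(\xi),\qquad \xi\in\mathbb R,
\]
for some $N>0$. Starting from the observation that (C3) together with $b\ge 0$ yields $b(u)\ge b'(0)u-N_0 u^2$ on $[0,\mathcal{K}]$ for $N_0:=\max\{L,\,b'(0)/K\}$, one expands
\[
b'(0)\phi(\xi-c_1\tau(\phi(\xi)))=b'(0)\phi(\xi-c_1 m)+b'(0)\bigl[\phi(\xi-c_1\tau(\phi(\xi)))-\phi(\xi-c_1 m)\bigr],
\]
estimates the bracket by $c_1\mathcal{T}\phi(\xi)$ times the local supremum of $|\phi'|$ (via the mean value theorem and $|\tau(u)-\tau(0)|\le\mathcal{T} u$), and treats $-N_0\phi^2(\xi-c_1\tau(\phi(\xi)))$ as an additional error. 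On a sufficiently negative half-line $(-\infty,\xi_1]$ both the local $|\phi'|$ and $\phi$ itself are arbitrarily small, so the whole error is dominated by $\epsilon\phi(\xi)$. On $[\xi_1,\infty)$ the uniform lower bound $\phi\ge\delta>0$ allows all bounded residues to be absorbed into $-N\phi^2(\xi)$ upon taking $N$ large. Together the two regions cover all of $\mathbb{R}$.

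With the inequality in hand, $u(x,t)=\phi(x+c_1 t)$ satisfies
\[
u_t\ge u_{xx}-(d+\epsilon)u+b'(0)u(x,t-m)-Nu^2,
\]
and applying Lemmas~\ref{le4.1}--\ref{le4.3} with $D_1=d+\epsilon$, $D_2=b'(0)$, $D_3=N$ at $c=c_*$ (after truncating the initial data at $\min\{(D_2-D_1)/D_3,\mathcal{K}\}$ if necessary) gives $\liminf_{t\to\infty}u(-\tilde c\,t,t)\ge (b'(0)-d-\epsilon)/N>0$ for every $\tilde c\in(c_1,c_*)$, whereas $u(-\tilde c\,t,t)=\phi((c_1-\tilde c)t)\to 0$ as $t\to\infty$ since $(c_1-\tilde c)t\to-\infty$. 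This contradiction completes the proof. The main obstacle is establishing the pointwise inequality: monotonicity of $\phi$ (used crucially in Theorem~\ref{th2no}) is replaced by the two-sided information contained in \eqref{4}, namely decay of $\phi$ and $\phi'$ to zero at $-\infty$, together with a uniform positive lower bound on right half-lines.
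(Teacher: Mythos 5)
Your proposal follows essentially the same route as the paper's proof: assume a profile with speed $c_1<c^*$, use $\phi'(\xi)\to 0$ as $\xi\to-\infty$ and the positive infimum of $\phi$ on right half-lines to establish the pointwise inequality $b(\phi(\xi-c_1\tau(\phi(\xi))))\ge -\epsilon\phi(\xi)+b'(0)\phi(\xi-c_1m)-N\phi^2(\xi)$ by the same two-region splitting, and then invoke Lemmas \ref{le4.1}--\ref{le4.3} to get spreading faster than $c_1$, contradicting $\phi\to 0$ along $\xi\to-\infty$. So the strategy, the decomposition, and the use of the auxiliary constant-delay equation all coincide with the paper.

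One step in your sketch over-claims, however. On the left half-line you argue that the error term $N_0\phi^2(\xi-c_1\tau(\phi(\xi)))$ is dominated by $\epsilon\phi(\xi)$ ``because $\phi$ itself is arbitrarily small.'' Smallness of $\phi$ near $-\infty$ only gives $N_0\phi^2(\zeta)\le \epsilon\phi(\zeta)$ with $\zeta=\xi-c_1\tau(\phi(\xi))$, i.e.\ smallness relative to the value at the \emph{shifted} point; since $b$ (hence $\phi$) is not monotone in this section, there is no a priori comparison $\phi(\zeta)\le C\phi(\xi)$, so domination by $\epsilon\phi(\xi)$ does not follow from decay of $\phi$ and $\phi'$ alone --- this comparison is exactly what monotonicity supplied in Theorem \ref{th2no}. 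The standard repair is to absorb the quadratic deficit into the linear term at the shifted point: for $\xi$ very negative, $b(\phi(\zeta))\ge (b'(0)-\epsilon')\phi(\zeta)\ge (b'(0)-\epsilon')\phi(\xi-c_1m)-\epsilon\phi(\xi)$, using your derivative estimate for the last step; one then runs the spreading argument with $D_2=b'(0)-\epsilon'$, which is harmless because $c_1<c^*$ is strict and the characteristic inequality survives a small perturbation of the coefficient. (To be fair, the paper's own proof is equally terse at this very point, asserting the left-region inequality with the full coefficient $b'(0)$ directly from $\phi'\to 0$; the coefficient-reduction reading, or a Harnack-type bound derived from the fixed-point representation $\phi=F(\phi)$, is what makes it rigorous.) With that adjustment your argument is complete and matches the paper's.
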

\begin{proof}
Were the statement false, then for some $c_1 <c^*,$ \eqref{2} has a positive solution $\phi(\xi)$ satisfying \eqref{4}. Then $\phi(\xi)$ is a fixed point of $F$ and it is evident that
\[
\lim_{\xi\to-\infty}\phi'(\xi)=0.
\]

Similar to the proof of Theorem \ref{th2no}, we can select $\epsilon >0$ such that
\[
b'(0) > d+2\epsilon
\]
and
\[
\lambda^2 -c \lambda -(d+ \epsilon ) +b'(0)e^{-\lambda c m}>0
\]
for all $\lambda >0, 3c \le 2c^* +c_1.$

Then there exists $N>0$ such that
\[
b(\phi (\xi -c_{1}\tau (\phi (\xi ))))\geq -\epsilon \phi (\xi )+b^{\prime}(0)\phi(\xi - c_1 m) -N \phi ^2 (\xi ),\xi \in
\mathbb{R},
\]%
although $b(u)$ is not monotone. In fact, because $\phi ^{\prime }(\xi
)\rightarrow 0,\xi \rightarrow -\infty ,$ then there exists $\xi _{2}$ such
that
\[
b(\phi (\xi -c_{1}\tau (\phi (\xi ))))\geq b^{\prime }(0)\phi (\xi -c_{1}m)-%
\frac{\epsilon }{2}\phi (\xi ),\xi <\xi _{2}+c_{1}M.
\]%
If $\xi \geq \xi _{2}+c_{1}M,$ then $\inf_{\xi \geq \xi _{2}+c_{1}M}\phi
(\xi )>0$ such that there exists $N$ satisfying%
\[
N\phi ^{2}(\xi )\geq N\left( \inf_{t\geq \xi _{0}+c_{1}M}\phi (t)\right)
^{2}\geq \mathcal{K}\geq b(\phi (\xi -c_{1}\tau (\phi (\xi )))),\xi \geq \xi
_{0}+c_{1}M.
\]
Similar to the proof of Theorem \ref{th2no}, we can verify the result.
\end{proof}
\begin{remark}{\rm
Theorem \ref{th5.2} still holds if $b(u),u\in [0,K]$ is monotone.}
\end{remark}

For $u\in [0,\mathcal{K}],$ we define
\[
\overline{b}(u)=\max_{v\in [0,u]}b(v), \underline{b}(u)=\min_{v\in [u, \mathcal{K}]}b(v).
\]
Clearly, both $\overline{b}(u)$ and $\underline{b}(u)$ are monotone and continuous and there exists $k>0$ such that
\[
0< k\le K \le  {\mathcal{K}}
\]
and
\[
\underline{b}(k)=dk, \overline{b}(\mathcal{K})=d \mathcal{K}.
\]

Consider the following two equations
\begin{equation}\label{5.1}
V_{t}(x,t)= V_{xx}(x,t)-d V(x,t)+\overline{b}(V(x,t-\tau(V(x,t)))),
\end{equation}
and
\begin{equation}\label{5.2}
v_{t}(x,t)= v_{xx}(x,t)-d v(x,t)+\underline{b}(v(x,t-\tau(v(x,t)))),
\end{equation}
in which all the parameters are the same as those in \eqref{1}. Then  the existence and nonexistence of monotone traveling wavefronts of \eqref{5.1} and \eqref{5.2} can be answered by the conclusions in Section 2.
\begin{theorem}\label{th5.1}
If $c> c^*,$ then \eqref{2} admits a positive solution satisfying \eqref{4}.
\end{theorem}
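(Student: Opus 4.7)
My plan follows the two‐auxiliary‐equation strategy hinted at in the introduction: for each fixed $c>c^*$, I would extract from \eqref{5.1} and \eqref{5.2} a pair of monotone wavefronts that bracket a solution of \eqref{2}, and then invoke Schauder's theorem on the integral operator $F$ of \eqref{3.5}. Since $\overline{b}$ and $\underline{b}$ are nondecreasing on $[0,\mathcal{K}]$ and share the linearization $b'(0)$ with $b$ near the origin, Theorem~\ref{th2} applies to both auxiliary problems with the same critical speed $c^*$ and furnishes a monotone wavefront $\overline{\phi}$ of \eqref{5.1} with $\overline{\phi}(-\infty)=0$, $\overline{\phi}(+\infty)=\mathcal{K}$, and a monotone wavefront $\underline{\phi}$ of \eqref{5.2} with $\underline{\phi}(-\infty)=0$, $\underline{\phi}(+\infty)=k$. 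The pointwise sandwich $\underline{b}\le b\le \overline{b}$ on $[0,\mathcal{K}]$ then makes $\overline{\phi}$ an upper solution and $\underline{\phi}$ a lower solution of \eqref{2} in the sense of Definition~\ref{de2}; a translation in $\xi$ ensures $\underline{\phi}\le\overline{\phi}$ globally.

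Next I would fix $\beta>d$ large enough that the estimates of Lemmas~\ref{le2.1}--\ref{le2.4} remain valid on the range $[0,\mathcal{K}]$, and work in the closed convex set
\[
\Gamma=\left\{\phi\in B_\mu(\mathbb{R},\mathbb{R}):\underline{\phi}\le\phi\le\overline{\phi},\ c|\phi(\xi_1)-\phi(\xi_2)|\le\frac{\beta K}{1+b'(0)KT}|\xi_1-\xi_2|\right\}.
\]
The core verification is $F(\Gamma)\subset\Gamma$. For the upper bound, I would write $\overline{\phi}=F_{\overline{b}}(\overline{\phi})$, where $F_{\overline{b}}$ is the analogue of $F$ built from \eqref{5.1}, and establish $H(\phi)(\xi)\le H_{\overline{b}}(\overline{\phi})(\xi)$ pointwise via
\[
H_{\overline{b}}(\overline{\phi})-H(\phi)=(\beta-d)(\overline{\phi}-\phi)+\bigl[\overline{b}(\overline{\phi}(\xi-c\tau(\overline{\phi}(\xi))))-b(\phi(\xi-c\tau(\phi(\xi))))\bigr],
\]
using in succession $b\le\overline{b}$, the monotonicity of $\overline{b}$ and $\overline{\phi}$, and the Lipschitz absorption trick of Lemma~\ref{le2.3} to bound the state-dependent shift by $c|\tau(\overline{\phi})-\tau(\phi)|\le cT(\overline{\phi}-\phi)$. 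The upshot is $H_{\overline{b}}(\overline{\phi})-H(\phi)\ge(\beta-d-b'(0)\,\mathrm{Lip}(\overline{\phi})\,cT)(\overline{\phi}-\phi)\ge 0$ once $\beta$ is large. The lower bound is symmetric, using $b\ge\underline{b}$ and the Lipschitz ceiling built into $\Gamma$. Continuity and compactness of $F$ in the weighted norm $|\cdot|_\mu$ carry over from Lemma~\ref{le3.8}. Schauder's fixed point theorem then produces $\phi^*\in\Gamma$ that solves \eqref{2}, and the sandwich $\underline{\phi}\le\phi^*\le\overline{\phi}$ delivers $\phi^*(-\infty)=0$ and $\liminf_{\xi\to+\infty}\phi^*(\xi)\ge k>0$, which is exactly \eqref{4}.

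I expect the pointwise comparison $H(\phi)\le H_{\overline{b}}(\overline{\phi})$ (and its mirror for $\underline{b}$) to be the main obstacle. Because $b$ is not globally monotone and the delay depends on the state, the naive inequality $b(\phi(\xi-c\tau(\phi(\xi))))\le\overline{b}(\overline{\phi}(\xi-c\tau(\overline{\phi}(\xi))))$ is not immediate: $\phi\le\overline{\phi}$ forces $c\tau(\phi)\le c\tau(\overline{\phi})$, which pushes the argument on the right backward and creates a sign-indefinite deficit that must be paid for by the $\beta(\overline{\phi}-\phi)$ term. Choosing $\beta$ large enough to absorb $b'(0)\,\mathrm{Lip}(\overline{\phi})\,cT$ while simultaneously respecting the Lipschitz ceiling imposed by Lemmas~\ref{le2.2}--\ref{le2.4} is the principal technical hurdle; once this is calibrated, the argument reduces to a straightforward adaptation of the monotone-$b$ machinery of Section~2.
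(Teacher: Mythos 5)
Your proposal follows the same skeleton as the paper's proof: the auxiliary monotone functions $\overline{b},\underline{b}$, an ordered pair of bounding functions, a set $\Gamma^*$ in $B_\mu$ with the sandwich and Lipschitz conditions (and no monotonicity requirement), and Schauder's fixed point theorem for $F$. The genuine difference is your choice of upper function: you take a monotone wavefront of \eqref{5.1}, whereas the paper takes the explicit cap $\overline{\phi}(\xi)=\min\{e^{\lambda_1(c)\xi},\mathcal{K}\}$ as in \eqref{5.3}, and only the lower function is a wavefront $\varphi$ of \eqref{5.2}, normalized by $\varphi(\xi)e^{-\lambda_1(c)\xi}\to 1$ so that $\varphi\le\overline{\phi}$ holds with no translation argument. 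The paper's choice buys two simplifications that your variant must supply by hand: (i) the upper invariance $F(\phi)\le\overline{\phi}$ follows from $b(u)\le b'(0)u$, $\tau(u)\ge m$ and monotonicity of the explicit cap (the flat part using $b(v)\le\overline{b}(\mathcal{K})=d\mathcal{K}$), so the Lipschitz-absorption calibration of $\beta$ that you rightly identify as the main hurdle is needed only on the lower side, exactly as in Lemma \ref{le2.3}; with your wavefront upper bound you must run the absorption on both sides, which works but doubles the bookkeeping. (ii) Your ordering-by-translation step needs more than uniform smallness of the translate: since $\overline{\phi}(\xi)\to 0$ as $\xi\to-\infty$, you must invoke the exact decay $\overline{\phi},\underline{\phi}\sim e^{\lambda_1(c)\xi}$ at $-\infty$ (available from the Section 2 construction, where the front is squeezed between $e^{\lambda_1(c)\xi}-qe^{\eta\lambda_1(c)\xi}$ and $e^{\lambda_1(c)\xi}$) to get the pointwise comparison on a left half-line before handling the compact middle part. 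Two shared technical footnotes: $\overline{b},\underline{b}$ coincide with $b$ near $0$, so $\lambda_1(c)$ and $c^*$ are indeed unchanged, but they are only Lipschitz rather than $C^1$, and the Lipschitz bound $b'(0)$ from (C3) is stated on $[0,K]$ while the arguments run on $[0,\mathcal{K}]$; both points only affect the choice of constants and are glossed in the paper as well. With these points made explicit your argument closes and gives $\underline{\phi}\le\phi^*\le\overline{\phi}$, hence \eqref{4}, the same conclusion by essentially the same machinery.
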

\begin{proof}
For any fixed $c>c^*,$   we define
\begin{equation}\label{5.3}
\overline{\phi }(\xi )=\min \{e^{\lambda _{1}(c)\xi },\mathcal{K}\},\underline{\phi }%
(\xi )=\varphi (\xi),
\end{equation}
where $\lambda _{1}(c)$ is the same as that in Section 2, $\varphi (\xi)$ is a monotone traveling wavefront of \eqref{5.2} and satisfies
\[
\lim_{\xi\to - \infty}\varphi (\xi)e^{- \lambda _{1}(c)\xi} =1.
\]
By the discussion in Section 2, we see that
\[
\overline{\phi }(\xi ) \ge \underline{\phi }(\xi ), \xi\in\mathbb{R}.
\]
Define
\[
\Gamma ^*\left( \underline{\phi },\overline{\phi }\right) =\left\{
\phi (\xi)\in B_\mu \left( \mathbb{R},\mathbb{R}\right):
\begin{array}{l}
(i)\text{ }\underline{\phi }(\xi)\leq \phi (\xi)\leq \overline{\phi }(\xi); \\
(ii)\text{ } c |\phi (\xi_1)-\phi(\xi_2)| \le \frac{\beta \mathcal{K}}{1+b'(0)\mathcal{K}\mathcal{T}} |\xi_1-\xi_2|, \xi_1,\xi_2\in \mathbb{R}
\end{array}
\right\}.
\]
Similar  to the discussion in Section 2, we can select $\beta, \mu$ such that
\begin{description}
  \item[(P1)] $\Gamma ^*$ is nonempty, convex, bounded and closed;
  \item[(P2)] $F: \Gamma ^* \to \Gamma ^*;$
  \item[(P3)] $F: \Gamma ^* \to \Gamma ^*$ is complete continuous in the sense of decay norm $|\cdot|_{\mu},$
\end{description}
in which the definition of $F$ is the same as those in Section 2. Then there exists $\phi\in \Gamma ^*$ such that $\phi$ is a fixed point of $F$ and satisfies
\[
k\le \liminf_{\xi\to\infty} \phi(\xi)\le \limsup_{\xi\to\infty} \phi(\xi)\le \mathcal{K},
\]
which implies \eqref{4}. This completes the proof.
\end{proof}

\begin{theorem}\label{th5.3}
If $c= c^*,$ then \eqref{2} admits a nontrivial  positive solution satisfying
\begin{equation}
k\le \liminf_{\xi\rightarrow +\infty
}\phi (\xi)\le \limsup_{\xi\rightarrow +\infty
}\phi (\xi) \le \mathcal{K}.
\end{equation}
\end{theorem}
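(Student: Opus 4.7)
The plan is to parallel the proof of Theorem \ref{th3}: obtain $\phi^{\ast}$ as a limit of the wave profiles produced by Theorem \ref{th5.1} at speeds $c_n \downarrow c^{\ast}$, but with a normalization and a final asymptotic argument adapted to the non-monotone setting.

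First I would pick a decreasing sequence $c_n \to c^{\ast}$ with $c_n > c^{\ast}$ and, by Theorem \ref{th5.1}, obtain a positive fixed point $\phi_n \in \Gamma^{\ast}$ of $F$ (with $c=c_n$) satisfying $\phi_n(\xi) \le \min\{e^{\lambda_1(c_n)\xi},\mathcal{K}\}$ (hence $\phi_n(-\infty)=0$) and $k \le \liminf_{\xi\to+\infty}\phi_n(\xi) \le \limsup \le \mathcal{K}$. Since $b$ is no longer monotone, the normalization $\phi_n(0)=K/2$ used in Theorem \ref{th3} is replaced by $\xi_n := \inf\{\xi : \phi_n(\xi)=k/2\}$, which is finite by the two bounds above, and I set $\tilde\phi_n(\xi) := \phi_n(\xi+\xi_n)$, so that $\tilde\phi_n(0)=k/2$ and $\tilde\phi_n(\xi)\le k/2$ on $(-\infty,0]$. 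The Lipschitz bound (Lemma \ref{le2.2} with $\mathcal{K},\mathcal{T}$ in place of $K,T$) together with $0\le\tilde\phi_n\le\mathcal{K}$ delivers uniform equicontinuity, so Ascoli--Arzela and a diagonal argument yield a subsequence with $\tilde\phi_n\to\phi^{\ast}$ uniformly on every bounded interval.

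Passing to the limit in the fixed-point equation $\tilde\phi_n = F(\tilde\phi_n)$ (with $c=c_n$) via dominated convergence---using uniform convergence of the kernels $e^{\gamma_i(c_n)(\xi-s)}$ on compacta, uniform continuity of $b$ and $\tau$ on $[0,\mathcal{K}]$, and the uniform bound $H(\tilde\phi_n)\le\beta\mathcal{K}$ from Lemma \ref{le2.1}---gives $\phi^{\ast} = F(\phi^{\ast})$ with $c=c^{\ast}$, so $\phi^{\ast}$ solves \eqref{2} at the critical speed. By construction $\phi^{\ast}(0)=k/2>0$, so $\phi^{\ast}$ is nontrivial and positive; the pointwise bound $\tilde\phi_n\le\mathcal{K}$ passes to the limit and yields $\limsup_{\xi\to+\infty}\phi^{\ast}(\xi)\le\mathcal{K}$.

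The step I expect to be the main obstacle is proving $\liminf_{\xi\to+\infty}\phi^{\ast}(\xi)\ge k$, which I would handle by a parallel limiting argument on the lower solutions. The wavefronts $\varphi_n$ of \eqref{5.2} used in Theorem \ref{th5.1} are monotone with $\varphi_n(+\infty)=k$, satisfy $\varphi_n\le\phi_n$, and enjoy the same Lipschitz estimate; passing to a further subsequence, the shifts $\tilde\varphi_n(\xi) := \varphi_n(\xi+\xi_n)$ converge uniformly on compacta to a monotone $\varphi^{\ast}$ which, by the same fixed-point limiting procedure, solves \eqref{2} at $c^{\ast}$ with the monotone birth function $\underline{b}$. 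The final lines of the proof of Theorem \ref{th3} (applied to $\underline{b}$) then force $\varphi^{\ast}(\pm\infty)\in\{0,k\}$, whence $\varphi^{\ast}(+\infty)=k$ once $\varphi^{\ast}$ is known to be nontrivial, and the sandwich $\varphi^{\ast}\le\phi^{\ast}$ gives the desired lower bound. The delicate sub-step is controlling $\xi_n$ so that $\tilde\varphi_n$ admits a nontrivial limit: $\phi_n(\xi_n)=k/2\le e^{\lambda_1(c_n)\xi_n}$ gives the lower bound $\xi_n\ge\ln(k/2)/\lambda_1(c_n)$, while $\xi_n\le\eta_n$ with $\varphi_n(\eta_n)=k/2$ (by monotonicity of $\varphi_n$ and $\varphi_n\le\phi_n$), and the normalization $\varphi_n(\xi)e^{-\lambda_1(c_n)\xi}\to 1$ as $\xi\to-\infty$ combined with $\lambda_1(c_n)\to\lambda_1(c^{\ast})$ keeps the transition layer $\eta_n$ bounded. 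With $\xi_n$ bounded, $\varphi^{\ast}$ is necessarily nontrivial (its translate inherits $\varphi^{\ast}(0)\le k/2$ and the exponential upper tail on the left), completing the argument.
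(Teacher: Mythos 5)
Your skeleton (speeds $c_n\downarrow c^*$, translation normalization, Ascoli--Arzela and dominated convergence to pass to the limit in $\phi_n=F(\phi_n)$, nontriviality from the pinned value at $0$, and $\limsup_{\xi\to+\infty}\phi^*(\xi)\le\mathcal{K}$ from $\phi_n\le\mathcal{K}$) matches the paper. The genuine gap is exactly the step you flag as delicate: the claim that the level set $\eta_n$ where the auxiliary front $\varphi_n$ of \eqref{5.2} equals $k/2$ stays bounded as $c_n\downarrow c^*$. The tail normalization $\varphi_n(\xi)e^{-\lambda_1(c_n)\xi}\to 1$ fixes the translate of each $\varphi_n$ separately but gives no uniform-in-$n$ control of where the profile reaches $k/2$; the only quantitative lower barrier supplied by Section 2 is the lower solution $e^{\lambda_1(c_n)\xi}-q_ne^{\eta\lambda_1(c_n)\xi}$, and it degenerates as $c_n\downarrow c^*$, since $\eta\downarrow 1$ (because $\lambda_2(c_n)/\lambda_1(c_n)\to 1$) and $q_n\to\infty$ (because $\Lambda(\eta\lambda_1(c_n),c_n)\to 0$), so its maximum tends to $0$. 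Worse, boundedness of $\eta_n$ should be expected to fail: if $\eta_n$ were bounded, a subsequence of the $\varphi_n$ would converge to a nontrivial critical front of \eqref{5.2} satisfying $\varphi^*(\xi)\le e^{\lambda_1(c^*)\xi}$ (the pointwise limit of the bounds $\varphi_n\le e^{\lambda_1(c_n)\xi}$), whereas critical monostable fronts generically carry an extra algebraic factor in their decay and cannot lie below a pure exponential with the critical rate. If $\eta_n-\xi_n\to\infty$, your $\tilde\varphi_n$ converge to $0$ locally uniformly and the sandwich $\tilde\varphi_n\le\tilde\phi_n$ yields nothing; and you cannot repair this by re-shifting $\varphi_n$ to the left, because the ordering $\varphi_n\le\phi_n$ comes only from the construction of $\Gamma^*$ (where $\varphi_n$ is the chosen lower bound) and the state-dependent delay provides no comparison principle to transfer it to other translates. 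Since your normalization (first crossing of $k/2$) gives no lower bound for $\tilde\phi_n$ on $[0,\infty)$, the entire estimate $\liminf_{\xi\to+\infty}\phi^*(\xi)\ge k$ rests on this unproved (and probably false) boundedness.

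The paper avoids the auxiliary fronts at the critical speed altogether. It normalizes at the last crossing of a small level $\delta\in(0,k/2)$, so that $\phi_n(0)=\delta$ and $\phi_n(\xi)\ge\delta$ for $\xi\ge 0$, hence the limit satisfies $\phi^*(\xi)\ge\delta$ on $[0,\infty)$ and $\phi^-:=\liminf_{\xi\to+\infty}\phi^*(\xi)\ge\delta>0$, $\phi^+:=\limsup_{\xi\to+\infty}\phi^*(\xi)\ge\delta$. Then, evaluating the fixed point identity $\phi^*=F(\phi^*)$ along sequences realizing $\phi^-$ and $\phi^+$ and using $\underline{b}\le b\le\overline{b}$ together with the monotonicity of $\underline{b},\overline{b}$ and dominated convergence, it obtains $\underline{b}(\phi^-)\le d\phi^-$ and $\overline{b}(\phi^+)\ge d\phi^+$, which force $\phi^-\ge k$ and $\phi^+\le\mathcal{K}$; nontriviality follows since $\phi^*(0)=\delta<3\phi^-/4$. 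If you want to keep your structure, replace the sandwich step by this fluctuation argument and adjust your normalization so that the limit is bounded below by a fixed positive constant on $[0,\infty)$; as written, your proof does not close.
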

\begin{proof}
Let $\delta \in (0, k/2)$ be a positive constant and $\{c_n\}_{n\in \mathbb{N}}$ be a decreasing sequence satisfying
\[
c_n \to c^*, n\to \infty.
\]
Then Theorem \ref{th5.1} implies that for each $c_n,$ $F$ with $c=c_n$ has a fixed point $\phi_n(\xi)$ such that
\begin{equation}\label{5.4}
\phi_n(0)=  \delta,\phi_n(\xi) \ge  \delta, \xi \ge 0, n\in \mathbb{N},
\end{equation}
then we can obtain the existence of $\phi^*(\xi)$ such that $\phi^*(\xi)$ is a positive solution of \eqref{2} with $c=c^*$ by a discussion similar to that in Section 2.

Denote
\[
\liminf_{\xi\to +\infty}\phi^*(\xi)= \phi^- (\ge  \delta), \limsup_{\xi\to +\infty}\phi^*(\xi)= \phi^+ (\ge  \delta).
\]
By the dominated convergence theorem in $F,$ we see that
\[
\phi^- \ge \frac{\beta \phi^- -d \phi^- +\underline{b}(\phi^-)}{\beta}
\]
and
\[
\phi^+ \le \frac{\beta \phi^+ -d \phi^+ +\overline{b}(\phi^+)}{\beta}.
\]
Then \eqref{5.4} is true. Because of $\phi^*(0) < \frac{3 \phi^-}{4},$ it is evident that $\phi^*(\xi)$ is not a constant. The proof is complete.
\end{proof}

\begin{remark}{\rm
Similar to those in \cite{fz,ma1,why}, we can obtain some sufficient conditions such that $\lim_{\xi\to\infty}\phi(\xi)$ exists even if $b$ is not monotone.}
\end{remark}

Unfortunately, we cannot obtain the existence of $\lim_{\xi\to -\infty}\phi(\xi)$ in Theorem \ref{th5.3}. However, under some additional assumptions, we can formulate the limit behavior as follows.
\begin{theorem}\label{th000}
Assume that $\mathcal{T}\ge 0$ is small enough. If $\phi (\xi)$ is defined by Theorem \ref{th5.3}, then $\lim_{\xi\to -\infty}\phi(\xi)=0.$
\end{theorem}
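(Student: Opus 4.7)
The plan is to show that the limit profile $\phi^*$ constructed in Theorem \ref{th5.3} inherits a pointwise exponential upper bound at $-\infty$ from the supercritical profiles $\phi_n$ (at speeds $c_n\downarrow c^*$), after suitably controlling the translation used to enforce the normalization $\phi_n(0)=\delta$.

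First, I would recall that for each $c_n>c^*$ the fixed point produced in Theorem \ref{th5.1}, call it $\phi_n^{\mathrm{raw}}$, lies in the set $\Gamma^*$ and therefore satisfies
\[
\varphi_n(\xi)\le \phi_n^{\mathrm{raw}}(\xi)\le \min\{e^{\lambda_1(c_n)\xi},\mathcal{K}\},\qquad \xi\in\mathbb{R},
\]
where $\varphi_n$ is the lower solution built in Theorem \ref{th5.1} as a monotone traveling wavefront of the auxiliary equation \eqref{5.2}, normalized so that $\varphi_n(\xi)e^{-\lambda_1(c_n)\xi}\to 1$ as $\xi\to-\infty$. The normalization $\phi_n(0)=\delta$ appearing in the proof of Theorem \ref{th5.3} is enforced by a translation, $\phi_n(\xi)=\phi_n^{\mathrm{raw}}(\xi+s_n)$ for a suitable shift $s_n\in\mathbb{R}$, and the upper bound transports to
\[
\phi_n(\xi)\le e^{\lambda_1(c_n)(\xi+s_n)},\qquad \xi+s_n\le \frac{\ln\mathcal{K}}{\lambda_1(c_n)}.
\]

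Next, I would prove that the translations $\{s_n\}$ stay bounded. The relation $\delta=\phi_n^{\mathrm{raw}}(s_n)\le e^{\lambda_1(c_n)s_n}$ yields the lower bound $s_n\ge \ln\delta/\lambda_1(c_n)\to \ln\delta/\lambda^*$, where $\lambda^*:=\lambda_1(c^*)$. For the upper bound, the asymptotic $\varphi_n(\xi)\sim e^{\lambda_1(c_n)\xi}$ at $-\infty$ implies that $\varphi_n$ first attains the value $\delta$ at some $t_n\approx\ln\delta/\lambda_1(c_n)$, and the pointwise comparison $\varphi_n\le \phi_n^{\mathrm{raw}}$ together with the monotonicity of $\varphi_n$ forces $s_n\le t_n$. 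Extracting a subsequence, $s_n\to s^*\in\mathbb{R}$.

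Taking $n\to\infty$ in the inherited upper bound then gives
\[
\phi^*(\xi)=\lim_{n\to\infty}\phi_n(\xi)\le e^{\lambda^*(\xi+s^*)}
\]
for all $\xi$ sufficiently negative. Because $\lambda^*>0$, the right-hand side tends to $0$ as $\xi\to-\infty$, and so $\phi^*(\xi)\to 0$ as $\xi\to-\infty$, which is the claim.

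The principal difficulty is the uniform control of the shifts $s_n$, which in turn rests on asymptotic estimates on the auxiliary wavefronts $\varphi_n$ of \eqref{5.2} that are stable as $c_n\to c^*$. This is precisely where the assumption that $\mathcal{T}$ is small enters: only with $\mathcal{T}$ small can the constants $\beta$, $\mu$ and the upper/lower solution construction of Section 2 be chosen independently of $n$ in a neighborhood of $c^*$, so that $\varphi_n$ retains the leading behavior $\varphi_n(\xi)\sim e^{\lambda_1(c_n)\xi}$ uniformly in $n$; without such uniformity the shifts $s_n$ could drift to $+\infty$ and the limiting upper bound would degenerate to the trivial estimate $\phi^*(\xi)\le \mathcal{K}$.
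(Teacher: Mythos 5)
There is a genuine gap at the crux of your argument: the uniform upper bound on the normalizing shifts $s_n$. Your bound $s_n\le t_n$ only helps if the first level-crossing points $t_n$ (where the auxiliary wavefronts $\varphi_n$ of \eqref{5.2} reach $\delta$) stay bounded as $c_n\downarrow c^*$, and your claim $t_n\approx \ln\delta/\lambda_1(c_n)$ is not justified. The normalization $\varphi_n(\xi)e^{-\lambda_1(c_n)\xi}\to 1$ at $-\infty$, together with the upper solution $\min\{e^{\lambda_1(c_n)\xi},k\}$, gives only $\varphi_n(\xi)\le e^{\lambda_1(c_n)\xi}$, i.e.\ a \emph{lower} bound on $t_n$; an \emph{upper} bound on $t_n$ requires a lower bound on $\varphi_n$ at a fixed finite location that is uniform in $n$. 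The only lower bound available from Section 2 is $\underline{\phi}_n(\xi)=\max\{e^{\lambda_1(c_n)\xi}-q_ne^{\eta_n\lambda_1(c_n)\xi},0\}$, and this degenerates as $c_n\downarrow c^*$: since $\eta_n<\lambda_2(c_n)/\lambda_1(c_n)\to 1$ and $\Lambda(\eta_n\lambda_1(c_n),c_n)\to 0$, one has $q_n\to\infty$ and the maximum of $\underline{\phi}_n$ tends to $0$, so $\underline{\phi}_n$ never reaches level $\delta$ for large $n$ and gives no control on $t_n$. Without boundedness of $s_n$ the inherited bound collapses to $\phi^*\le\mathcal{K}$, exactly the degeneration you mention, so the key step of the proof is missing rather than merely technical.

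Moreover, your explanation that smallness of $\mathcal{T}$ is what makes the Section 2 constants uniform in $n$ near $c^*$ misidentifies the role of that hypothesis; no choice of $\mathcal{T}$ repairs the degeneration of the lower solutions at $c=c^*$ described above, since $q_n\to\infty$ for every fixed $\mathcal{T}\ge 0$. In the paper the hypothesis is used quite differently, and the proof runs on the limit profile itself rather than on the approximating sequence: the choice of $\beta$ yields the gradient bound $|c^*\phi'(\xi)|<L_3$, whence $|b(\phi(\xi-c^*\tau(\phi(\xi))))-b(\phi(\xi-c^*m))|\le b'(0)L_3\mathcal{T}\phi(\xi)$, so $\phi$ is an upper solution of a constant-delay equation with death rate $d+b'(0)L_3\mathcal{T}$; smallness of $\mathcal{T}$ guarantees $d+b'(0)L_3\mathcal{T}<b'(0)$, so zero stays linearly unstable for that comparison equation, and the choice $4\delta=\underline{k}$ in the normalization of Theorem \ref{th5.3} plus the asymptotic spreading result (Lemma \ref{le4.3}, in the spirit of Lemmas \ref{le4.1}--\ref{le4.2}) produces a contradiction whenever $\limsup_{\xi\to-\infty}\phi(\xi)>0$. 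If you want to salvage your route, you would have to prove a shift/level-set bound for the near-critical fronts of \eqref{5.2} that is uniform as $c_n\downarrow c^*$ (a nontrivial statement about convergence to the critical front under the $e^{\lambda_1(c_n)\xi}$ normalization), and you would still need to explain where small $\mathcal{T}$ genuinely enters.
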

\begin{proof}
Similar to Lemma \ref{le2.2}, if we take
\[
4 \beta = (1+b^{\prime }(0)\mathcal{K}\mathcal{T})^2 {c^*}^2 + 4d,
\]
then
\[
|c^*\phi '(\xi)|<\frac{\beta K}{1+ b^{\prime }(0)\mathcal{K}}=: L_3.
\]
Therefore, we have
\[
\left\vert b(\phi (\xi -c^* \tau (\phi (\xi ))))-b(\phi (\xi -cm))\right\vert
\leq b^{\prime }(0)L_{3}\mathcal{T}\phi (\xi )
\]
and so
\begin{eqnarray*}
&&-d\phi (\xi ) +b(\phi (\xi -c^* \tau (\phi (\xi ))))\\
&\geq &-(d+b^{\prime }(0)L_{3}\mathcal{T})\phi (\xi )+b(\phi (\xi -cm)).
\end{eqnarray*}

If $\mathcal{T}>0$ such that
\[
d+b^{\prime }(0)L_{3}\mathcal{T} <b'(0),
\]
then there exists $\underline{k} \in (0,k]$ such that
\[
(d+b^{\prime }(0)L_{3}\mathcal{T})\underline{k}=\underline{b}(\underline{k}).
\]

Take $4\delta =\underline{k}$ in the proof of Theorem \ref{th5.3}. If $\limsup_{\xi\to -\infty}\phi(\xi)>0,$  then there exist
a strictly decreasing sequence $\{\xi_j\}$ and a constant $\epsilon >0$ such that
\[
\xi_j<0, \xi_j \to -\infty, j\to \infty
\]
and
\[
2\phi (\xi ) >\limsup_{\xi\to -\infty}\phi(\xi), \xi \in [\xi_j -\epsilon, \xi_j +\epsilon], j\in \mathbb{N}.
\]

Note that $u(x,t)=\phi (\xi )$ is an upper solution of
\begin{equation*}
u_{t}(x,t)\ge  u_{xx}(x,t)-(d+b^{\prime }(0)L_{3}\mathcal{T})\phi (\xi )+b(\phi (\xi -cm)),
\end{equation*}
then the standard comparison principle and asymptotic spreading \cite{thieme} indicate that
\[
4\phi (\xi_j )>3 \underline{k},
\]
and a contradiction occurs. The proof is complete.
\end{proof}
\begin{remark}{\rm
The proof of Theorem \ref{th000} is similar to Lin and Ruan \cite[Section 5.5]{lr}.}
\end{remark}

\section*{Acknowledgments}
Guo Lin is grateful to Professor  Xingfu Zou for his very valuable comments and suggestions.


\begin{thebibliography}{10}\setlength{\itemsep}{0mm}\linespread{1}\selectfont
\bibitem{afw} W.G. Aiello, H.I. Freedman, J. Wu, Analysis of a model representing stage-structured population growth with state-dependent time delay, \emph{SIAM J. Appl. Math.,}   \textbf{52} (1992), 855-869.

\bibitem{and} H.G. Andrewartha, L.C. Birch, The Distribution and Abundance of Animals, University
of Chicago Press, Chicago, IL, 1954.

\bibitem{ari} O. Arino, K.P. Hadeler, M.L. Hbid, Existence of periodic solutions for delay differential equations with state dependent delay, \emph{J. Differential Equations,} \textbf{144} (1998), 263-301.

\bibitem{coo} K.L. Cooke, W. Huang, On the problem of linearization for state-dependent delay differential equations, \emph{Proc. Amer. Math. Soc.,} \textbf{124} (1996) 1417-1426.

\bibitem{fz}
J. Fang, X.-Q. Zhao, Existence and uniqueness of traveling waves for non-monotone integral equations with applications, \emph{J. Differential Equations,} \textbf{248} (2010), 2199-2226.

\bibitem{har}  F. Hartung, T. Krisztin, H.-O. Walther, and J. Wu, Functional differential equations with state-dependent delays: Theory and applications, in \emph{``Handbook of Differential Equations: Ordinary Differential Equations"} (ed, by A. Canada), Elsevier, Dordrecht, The Netherlands, 2006.

\bibitem{hu} Q. Hu, J.  Wu, X. Zou, Estimates of periods and global continua of periodic solutions for state-dependent delay equations, \emph{SIAM J. Math. Anal.,}  \textbf{44}  (2012),   2401-2427.

\bibitem{liangzhao}  X. Liang,  X.Q. Zhao, Asymptotic speeds of spread and traveling waves
for monotone semiflows with applications, \emph{Comm. Pure Appl.
Math.,} \textbf{60} (2007), 1-40.

\bibitem{lr} G. Lin, S. Ruan, Traveling wave solutions for delayed reaction-diffusion systems and applications to Lotka-Volterra competition-diffusion models with distributed delays,  \emph{J. Dynam. Diff. Eqns.,} in press.

\bibitem{ma01} S. Ma, Traveling wavefronts for delayed
reaction-diffusion systems via a fixed point theorem, \emph{J.
Differential Equations,} \textbf{171} (2001), 294-314.

\bibitem{ma1} S. Ma, Traveling waves for non-local delayed diffusion equations via
auxiliary equations, \emph{J. Differential Equations,} \textbf{237}
(2007), 259-277.

\bibitem{magal}  P. Magal, O. Arino, Existence of periodic solutions for a state-dependent delay differential equation, \emph{J. Differential Equations,} \textbf{165} (2000),   61-95

\bibitem{mal} J. Mallet-Paret, R.D. Nussbaum, Superstability and rigorous asymptotics in singularly perturbed state-dependent delay-differential equations, \emph{J. Differential Equations,}  \textbf{250}  (2011),  4037-4084.

\bibitem{schaaf} K.W. Schaaf, Asymptotic behavior and traveling wave
solutions for parabolic functional differential equations,
\emph{Trans. Amer. Math. Soc.,} \textbf{302} (1987), 587-615.

\bibitem{sz} H.L. Smith, X.Q. Zhao,  Global asymptotic stability of
traveling waves in delayed reaction-diffusion equations, \emph{SIAM
J. Math. Anal.,} \textbf{31} (2000), 514-534.

\bibitem{thieme} H.R. Thieme, X.Q. Zhao, Asymptotic speeds of spread and traveling
waves for integral equations and delayed reaction-diffusion models,
\emph{J. Differential Equations,} \textbf{195} (2003), 430-470.

\bibitem{wa}  H.O. Walther, Semiflows for neutral equations with state-dependent delays, in \emph{``Infinite dimensional dynamical systems"} (ed. by J. Mallet-Paret, J. Wu, Y. Yi and H. Zhu),  211-267, Fields Inst. Commun., 64, Springer, New York, 2013.

\bibitem{why}  H. Wang, On the existence of traveling waves
for delayed reaction-diffusion equations, \emph{J. Differential
Equations,} \textbf{247} (2009),  887-905.

\bibitem{wuzou2} J. Wu, X. Zou, Traveling wave fronts of
reaction-diffusion systems with delay, \emph{J. Dynam. Diff. Eqns.,}
\textbf{13} (2001), 651-687.
\end{thebibliography}
\end{document}